\documentclass[11pt,oneside,english]{amsart}
\usepackage[T1]{fontenc}
\usepackage[latin9]{inputenc}
\usepackage{geometry}
\geometry{verbose,tmargin=3cm,bmargin=3cm,lmargin=2.5cm,rmargin=2.5cm,footskip=1cm}
\usepackage{babel}
\usepackage{amsthm}
\usepackage{amsbsy}
\usepackage{amssymb}
\usepackage{graphicx}
\usepackage{graphics}
\usepackage[unicode=true,pdfusetitle,
 bookmarks=true,bookmarksnumbered=false,bookmarksopen=false,
 breaklinks=false,pdfborder={0 0 1},backref=false,colorlinks=false]
 {hyperref}

\def\AT{\mathbf{o}}
\def\CF{\mathit{CF}}

\def\EH{\mathit{EH}}
\def\HF{\mathit{HF}}
\def\HFh{\widehat{\HF}}
\def\SFH{\mathit{SFH}}
\def\NN{\mathbb{N}}
\def\bolda{\boldsymbol{\alpha}}
\def\boldb{\boldsymbol{\beta}}
\def\arca{\underline{\mathbf{a}}}
\def\arcb{\underline{\mathbf{b}}}
\def\a{\alpha}
\def\b{\beta}
\def\T{\mathbb{T}}
\def\x{\mathbf{x}}
\def\y{\mathbf{y}}
\def\z{\mathbf{z}}
\def\S{\Sigma}

\def\Z{\mathbb{Z}}

\def\ol{\overline}
\def\H{\mathcal{H}}
\def\F{\mathcal{F}}
\def\N{\mathbb{N}}
\def\Int{\text{Int}}
\def\cP{\mathcal{P}}

\makeatletter

\providecommand{\tabularnewline}{\\}

\numberwithin{equation}{section}
\numberwithin{figure}{section}
 \theoremstyle{definition}
 \newtheorem{defn}{\protect\definitionname}
\theoremstyle{plain}
\newtheorem{thm}{\protect\theoremname}
\newtheorem{prop}[thm]{Proposition}
  \theoremstyle{plain}
  \newtheorem{lem}[thm]{\protect\lemmaname}
  \theoremstyle{remark}
  \newtheorem{rem}[thm]{Remark}
  \newtheorem*{rem*}{\protect\remarkname}
  \newtheorem{qn}{Question}
  \theoremstyle{plain}
  \newtheorem{cor}[thm]{\protect\corollaryname}

\makeatother

  \providecommand{\corollaryname}{Corollary}
  \providecommand{\definitionname}{Definition}
  \providecommand{\lemmaname}{Lemma}
  \providecommand{\remarkname}{Remark}
\providecommand{\theoremname}{Theorem}

\begin{document}

\title{Spectral order for contact manifolds with convex boundary}%

\author{Andr\'as Juh\'asz}%
\address{Mathematical Institute, University of Oxford, Andrew Wiles Building,
Radcliffe Observatory Quarter, Woodstock Road, Oxford, OX2 6GG, UK}%
\email{juhasza@maths.ox.ac.uk}%

\author{Sungkyung Kang}%
\address{Mathematical Institute, University of Oxford, Andrew Wiles Building,
Radcliffe Observatory Quarter, Woodstock Road, Oxford, OX2 6GG, UK}%
\email{sungkyung.kang@maths.ox.ac.uk}%

\subjclass[2010]{57M27; 57R17; 57R58}%
\keywords{Contact structure; Spectral order; Heegaard Floer homology}

\date{}
\begin{abstract}
We extend the Heegaard Floer homological definition of spectral order for closed contact 3-manifolds
due to Kutluhan, Mati\'c, Van Horn-Morris, and Wand to contact 3-manifolds with
convex boundary. We show that the order of a codimension zero contact submanifold
bounds the order of the ambient manifold from above.
As the neighborhood of an overtwisted disk has order zero, we obtain
that overtwisted contact structures have order zero.
We also prove that the order of a small perturbation of a
$2\pi$ Giroux torsion domain has order at most two,
hence any contact structure with positive Giroux torsion has order at most two
(and, in particular, a vanishing contact invariant).
\end{abstract}

\maketitle

\section{Introduction}

Algebraic torsion of closed contact $(2n-1)$-manifolds was defined by
Latschev and Wendl~\cite{key-11} via symplectic field theory.
It is an invariant with values in $\NN \cup \{\infty\}$ whose finiteness
gives obstructions to the existence of symplectic fillings and exact symplectic cobordisms.
They also showed that the order of algebraic torsion is zero if and only if the
contact homology is trivial -- in particular, if the contact structure is overtwisted --
and it has order at most one in the presence of positive Giroux torsion.
Note that the analytical foundations of symplectic field theory are still under
development. Hence, in the appendix, Hutchings provided a similar numerical invariant
for contact 3-manifolds via embedded contact homology; however, it is currently unknown
whether this is independent of the contact form.

Motivated by the isomorphism between embedded contact homology
and Heegaard Floer homology, Kutluhan, Mati\'c, Van Horn-Morris, and Wand~\cite{key-10, key-14}
defined a Heegaard Floer homological analogue of algebraic torsion for closed contact 3-manifolds
called \emph{spectral order} (or \emph{order} in short), and denoted it by~$\AT$.
Their definition uses open book decompositions, and gives a refinement of the
Ozsv\'ath-Szab\'o contact invariant~$c(\xi)$. Using the fact that an overtwisted contact structure
is supported by an open book with non right-veering monodromy, they proved that $\AT(M,\xi) = 0$ if
$\xi$ is overtwisted.

In this paper, we extend $\AT$ to contact manifolds with convex boundary,
following the definition of Kutluhan et al.~in the closed case.
The definition is in terms of a partial open book decomposition of
the underlying sutured manifold supporting the contact structure,
and a collection of arcs on the page, containing a basis. This data gives rise to a filtration of
the sutured Floer boundary map, and the spectral order is the index of the first
page of the associated spectral sequence where the distinguished generator
representing the contact invariant vanishes, or $\infty$ otherwise. Then
we take the minimum over all partial open books together with a collections of arcs containing a basis.
(This extension of the definition of~$\AT$ was also independently observed
by Kutluhan et al.~\cite{key-14}.)

Our first main result is that the spectral order of a codimension zero contact submanifold
gives an upper bound on the spectral order of the ambient manifold.

\begin{thm} \label{thm:ineq}
Let $(M,\xi)$ be a contact $3$-manifold with convex boundary.
If $(N,\xi|_N)$ is a codimension zero submanifold of~$\text{Int}(M)$ with convex boundary,
then
\[
\AT(N,\xi|_N) \ge \AT(M,\xi).
\]
\end{thm}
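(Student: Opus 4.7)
The plan is to exhibit, for $M$, a partial open book and arc collection whose associated spectral sequence kills $\EH(M,\xi)$ by page $\AT(N,\xi|_N)+1$. Write $k := \AT(N,\xi|_N)$ and fix a partial open book $(S,P,h)$ for the sutured manifold underlying $(N,\xi|_N)$, together with an arc collection $\arca$ containing a basis, for which $\EH(N,\xi|_N)$ first becomes a boundary on the $(k+1)$-st page of the associated spectral sequence.

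First I would enlarge $(S,P,h)$ to a partial open book $(S',P',h')$ for the sutured manifold underlying $(M,\xi)$ in which $S$ embeds as a subsurface and $P\subset P'$. This can be done by viewing $M\setminus\text{Int}(N)$ as a sutured cobordism with convex boundary and decomposing it into contact handles; each handle attachment prescribes a compatible enlargement of the page and of the monodromy, in the spirit of Honda-Kazez-Mati\'c contact gluing. Then I would extend $\arca$ by adjoining basis arcs $\arca'$ for the added region $P'\setminus P$. The resulting sutured Heegaard diagram contains the original one as a sub-diagram, and the distinguished generator $\EH(M,\xi)$ decomposes as $\EH(N,\xi|_N)\otimes \x'$, where $\x'$ is the canonical generator coming from $\arca'$.

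With this setup, the filtration of the sutured Floer complex of $M$ induced by $\arca\cup\arca'$ should restrict on the sub-diagram to the filtration used to compute $\AT(N,\xi|_N)$, and a neck-stretching of the almost complex structure along $\partial N$ should confine low-filtration holomorphic disks to the sub-diagram. This produces a morphism of filtered complexes from the $N$-complex into the $M$-complex sending $\EH(N,\xi|_N)$ to $\EH(M,\xi)$, and hence a morphism of the induced spectral sequences. Since $\EH(N,\xi|_N)$ is a boundary on page $k+1$, so is its image $\EH(M,\xi)$, yielding $\AT(M,\xi)\le k$.

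The main obstacle is precisely the filtration-compatibility statement: one must rule out differentials emanating from $\EH(N,\xi|_N)$ in the enlarged complex that sit at a lower filtration index than any contribution seen inside the $N$-complex. This is where the convex boundary of $N$ is essential, via a locality/neck-stretching argument for holomorphic disks crossing $\partial N$. A secondary technical point is to verify that the contact handle decomposition of the sutured cobordism $M\setminus\text{Int}(N)$ can always be realized by an enlargement of the given partial open book of $N$, rather than forcing a stabilization that would change the $\AT$-value on the $N$-side.
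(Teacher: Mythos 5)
Your overall strategy is the same as the paper's: build a partial open book and arc collection for $(M,\xi)$ extending the given one for $(N,\xi|_N)$ via Honda--Kazez--Mati\'c contact gluing, map $\EH(N,\xi|_N)$ to $\EH(M,\xi)$ by $\y \mapsto (\y,\x')$, argue that this is a filtered chain map, and conclude via the induced morphism of spectral sequences. However, you leave the crucial step --- filtration compatibility --- as an unresolved ``main obstacle,'' and the mechanism you propose to address it (neck-stretching along $\partial N$) is not what is needed.

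The locality of holomorphic disks is not obtained analytically in the paper: Honda--Kazez--Mati\'c already prove, as part of establishing that $\Phi \colon \y \mapsto (\y,\x')$ is a chain map, that any holomorphic disk contributing to $\partial(\y,\x')$ has domain supported entirely in the sub-diagram $\Sigma$ and lands on a generator of the form $(\z,\x')$. This is a combinatorial consequence of choosing the diagram to be contact-compatible near $\partial N$; no perturbation of the almost complex structure is required. What does require a new argument --- and what your proposal omits entirely --- is that the quantity $J_+(C)$ computed in the enlarged diagram $\H'$ agrees with $J_+(C)$ computed in $\H$. This is not automatic, because the formula
\[
J_{+}(C)=2\bigl(n_{\mathbf{y}}(D)+n_{\mathbf{z}}(D)\bigr)-1+|\mathbf{y}|-|\mathbf{z}|
\]
involves the permutation-cycle counts $|\mathbf{y}|$ and $|\mathbf{z}|$, which see the full generators $(\y,\x')$ and $(\z,\x')$ of $\H'$, not just $\y$ and $\z$. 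The paper closes this by observing that $|(\y,\x')| = |\y| + |\x'|$ and $|(\z,\x')| = |\z| + |\x'|$, so the $|\x'|$ contributions cancel in the difference, while the $n$-terms and Maslov index depend only on the part of the domain inside $\Sigma$. Without this explicit computation you have a morphism of chain complexes but not a morphism of \emph{filtered} chain complexes, and the spectral-sequence comparison does not follow. Your secondary worry --- that realizing the contact handle decomposition might force a stabilization of the given partial open book of $N$ --- is in fact not an issue: the Honda--Kazez--Mati\'c gluing construction extends the existing page and monodromy without altering them on the $N$-side.
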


We will prove this result in Section~\ref{sec:ineq}.
As a corollary, we show that if a contact manifold with convex boundary
is overtwisted, then it has spectral order zero. This follows
immediately from a simple computation that a neighborhood of an overtwisted
disk has spectral order zero.

In Section~\ref{sec:gt}, we carry out a computation that shows that the spectral order
of a slight enlargement of a Giroux $2\pi$-torsion $T^2 \times I$
has spectral order at most two. In particular, every contact manifold
with positive Giroux torsion has vanishing Ozsv\'ath-Szab\'o invariant,
which was proved in the closed case by Ghiggini, Honda, and Van Horn-Morris~\cite{key-5}
(the sutured case also follows from their work when combined with \cite[Theorem~1.1]{key-2}).
Together with Theorem~\ref{thm:ineq}, we obtain the following corollary.

\begin{thm} \label{thm:gt}
If a contact $3$-manifold $(M,\xi)$ with convex boundary has Giroux $2\pi$-torsion,
then
\[
\AT(M,\xi) \le 2.
\]
\end{thm}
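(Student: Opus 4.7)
The plan is to deduce Theorem~\ref{thm:gt} by combining Theorem~\ref{thm:ineq} with the local computation promised in Section~\ref{sec:gt}. Since the excerpt tells us that the algebraic torsion of a slight enlargement of a Giroux $2\pi$-torsion domain $T^2 \times I$ is at most two, the main work for Theorem~\ref{thm:gt} is really just realizing such a domain as a codimension zero contact submanifold of $(M,\xi)$ with convex boundary, and then invoking the monotonicity statement.

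First I would recall that by the definition of positive Giroux torsion, there is a contact embedding of $(T^2 \times [0,1], \xi_{\mathrm{GT}})$ into $(\mathrm{Int}(M), \xi)$, where $\xi_{\mathrm{GT}} = \ker(\cos(2\pi t)\,dx - \sin(2\pi t)\,dy)$ with coordinates $(x,y) \in T^2$ and $t \in [0,1]$. To apply Theorem~\ref{thm:ineq} I need a codimension zero submanifold with convex boundary. The standard Giroux torsion domain itself has pre-Lagrangian boundary tori, which are not convex, so the natural fix is to slightly enlarge it: extend the contact embedding to a neighborhood $(T^2 \times [-\varepsilon, 1+\varepsilon], \tilde\xi)$ inside $\mathrm{Int}(M)$ and perturb $T^2 \times \{-\varepsilon\}$ and $T^2 \times \{1+\varepsilon\}$ to be convex. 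This is the same ``slight enlargement'' considered in Section~\ref{sec:gt}, so its algebraic torsion has already been bounded above by two.

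With $(N,\xi|_N)$ taken to be this enlarged domain, I then apply Theorem~\ref{thm:ineq} directly to obtain
\[
\AT(M,\xi) \le \AT(N, \xi|_N) \le 2,
\]
which is the desired inequality. The essentially only step that needs care is checking that the enlargement still embeds into the original $(M,\xi)$ with convex boundary; this is routine given that convex surfaces are $C^\infty$-generic and that the contact embedding of $T^2 \times [0,1]$ can be extended to a slightly larger product by the contact neighborhood theorem applied to the boundary tori. The main obstacle, such as it is, therefore lies not in this theorem but in the computation of Section~\ref{sec:gt} that bounds $\AT$ of the enlargement by two; given that computation, the present theorem is a direct corollary of Theorem~\ref{thm:ineq}.
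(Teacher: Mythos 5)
Your proposal is correct and follows the paper's own argument exactly: realize the slightly enlarged Giroux torsion domain $\widetilde{M}$ as a codimension zero contact submanifold with convex boundary, bound its algebraic torsion by two via the explicit computation (Proposition~\ref{prop:gt}), and conclude by the monotonicity of $\AT$ under inclusion (Theorem~\ref{thm:ineq}). The only point the paper states slightly more carefully is that the convex perturbation of the pre-Lagrangian boundary tori follows from Ghiggini's work, with the dividing slopes arranged to be $0$ and $\infty$ after a change of coordinates.
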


The inequality $\mathit{AT} \le 1$ was shown in the closed case by Latschev and Wendl~\cite[Theorem~2]{key-11}
via symplectic field theory, and conjectured in the Heegaard Floer setting
in the closed case by Kutluhan et al.~\cite[Question~6.3]{key-14}.
More generally, they asked whether the presence of planar $k$-torsion (see~\cite[Section~3.1]{key-11}
for a definition) implies that the spectral order is at most~$k$.

\subsection*{Acknowledgement} We would like to thank Cagatay Kutluhan, Gordana Mati\'c,
Jeremy Van Horn-Morris, and Andy Wand for pointing out a mistake in the first version of
this paper, and for helpful discussions, and Paolo Ghiggini and Chris Wendl for their comments.

This project has received funding from the European Research Council (ERC) under the European
Union's Horizon 2020 research and innovation programme (grant agreement No 674978).
The first author was supported by a Royal Society Research Fellowship.

\section{Spectral order for manifolds with boundary}

We first recall the definition of spectral order
for closed contact $3$-manifolds due to Kutluhan, Mati\'c, Van Horn-Morris, and Wand~\cite{key-14}.
Let $(M,\xi)$ be a closed contact 3-manifold. By the Giroux correspondence
theorem \cite{key-9}, the contact structure $\xi$ is supported by
some open book decomposition~$(S,\phi)$ of $M$, which is well-defined up to
positive stabilizations. In particular, $M$ is identified with 
$S \times I/{\sim}$, where $(x,1) \sim (\phi(x),0)$ for every $x \in S$,
and $(x,t) \sim (x,t')$ for every $x \in \partial S$ and $t$, $t' \in I$.

An arc basis on~$S$ is a set of pairwise disjoint properly embedded arcs
that forms a basis of $H_{1}(S,\partial S)$.
A collection of pairwise disjoint arcs $\underline{\mathbf{a}} = \{a_1,\dots,a_n\}$
on $S$ that contains a basis induces an ``overcomplete'' Heegaard diagram
$(\Sigma,\boldsymbol{\alpha},\boldsymbol{\beta})$ of $M$, as follows.
We obtain~$\arcb = \{b_1,\dots,b_n\}$ by isotoping $\arca$
such that the endpoints of $\arca$ are moved in
the positive direction along $\partial S$, and $|a_i \cap b_j| = \delta_{ij}$ for $i$, $j \in \{1, \dots n\}$.
Then we set $\Sigma = (S\times\{1/2\}) \cup_{\partial S} (-S\times\{0\})$. Furthermore, we let
$\bolda = \{\a_1, \dots, \a_n\}$ and $\boldb = \{\b_1,\dots,\b_n\}$, where
\[
\begin{split}
\a_i &:= (a_i \times\{1/2\}) \cup (a_i \times \{0\}) \text{ and}\\
\b_i &:= (b_i \times \{1/2\}) \cup (\phi(b_i) \times \{0\})
\end{split}
\]
for $i \in \{1, \dots, n \}$.
We also choose a basepoint in each connected component of $S \setminus \arca$
away from the isotopy between $\arca$ and $\arcb$, and denote the set of these on~$S \times \{1/2\} \subset \S$ by~$\mathbf{z}$.
Then $(\Sigma,\boldsymbol{\alpha},\boldsymbol{\beta},\mathbf{z})$
is a multi-pointed Heegaard diagram of~$M$.

We say that a domain $D$ in the diagram $(\Sigma,\boldsymbol{\alpha},\boldsymbol{\beta},\mathbf{z})$
connects $\mathbf{x}$, $\mathbf{y}\in\mathbb{T}_{\alpha}\cap\mathbb{T}_{\beta}$
if $\partial(\partial D\cap\boldsymbol{\alpha})=\mathbf{x}-\mathbf{y}$
and $\partial(\partial D\cap\boldsymbol{\beta})=\mathbf{y}-\mathbf{x}$,
and $n_z(D) = 0$ for every $z \in \mathbf{z}$.
We denote by $D(\mathbf{x},\mathbf{y})$ the set of such domains.
If $\x = (x_1, \dots, x_n)$, then there is a unique permutation $\pi_\x \in S_n$
such that $x_i \in \a_i \cap \b_{\pi_\x(i)}$ for every $i \in \{1, \dots, n\}$.
Using the above Heegaard diagram, Kutluhan at al.~\cite{key-14} defined
a function~$J_{+}$ that assigns an integer
\[
J_{+}(D)=n_{\mathbf{x}}(D)+n_{\mathbf{y}}(D)-e(D)+|\mathbf{x}|-|\mathbf{y}|
\]
to every domain $D\in D(\mathbf{x},\mathbf{y})$.
Here, $n_{\mathbf{x}}(D)$ is the sum over all $p\in\mathbf{x}$ of
the averages of the coefficients of $D$ at the four regions around
$p$, the term $e(D)$ is the Euler measure of $D$, and $|\mathbf{x}|$,
$|\mathbf{y}|$ are the number of cycles in the permutations $\pi_\x$ and $\pi_\y$, respectively. When
$D$ is a domain of Maslov index~$1$, the equality $e(D)=1-n_{\mathbf{x}}(D)-n_{\mathbf{y}}(D)$
holds by the work of Lipshitz~\cite{key-6}, so the formula becomes
\[
J_{+}(D)=2(n_{\mathbf{x}}(D)+n_{\mathbf{y}}(D))-1+|\mathbf{x}|-|\mathbf{y}|.
\]
For any topological Whitney disk $\psi \in \pi_{2}(\mathbf{x},\mathbf{y})$,
we can define $J_{+}(\psi)$ as the value $J_{+}(D(\psi))$, where $D(\psi)$
is the domain of $C$. The function $J_{+}$ is additive in the sense
that
\[
J_{+}(D_{1}+D_{2})=J_{+}(D_{1})+J_{+}(D_{2})
\]
for every $D_{1} \in D(\x_1,\x_2)$ and $D_{2} \in D(\x_2, \x_3)$.
Furthermore, $J_{+}(u)$ is always a nonnegative even integer for any J-holomorphic
disk~$u$. Hence, we have a splitting
\[
\widehat{\partial}_{\HF}=\partial_{0}+\partial_{1}+\partial_{2}+\cdots
\]
of the Heegaard Floer differential $\widehat{\partial}_{\HF}$, where $\partial_{i}$
is defined by counting all J-holomorphic disks $u$ satisfying $\mu(u)=1$
and $J_{+}(u)=2i$. As shown in~\cite{key-14}, this gives a spectral
sequence
\[
E^k(S,\phi,\arca)= H_{\ast}\left(E^{k-1}(S,\phi,\arca), d^{k-1}\right),
\]
induced by the filtered complex
\[
\left(C = \bigoplus_{i\in \N}\widehat{CF}(\Sigma,\boldb,\bolda,\mathbf{z})_{i} \text{, }\widehat{\partial}\right).
\]
The $j$-th coordinate of the differential $\widehat{\partial} \underline{c}$ for $\underline{c} = (c_{i})_{i \in \N} \in C$
and $j \in \mathbb{N}$ is defined as
\[
\left(\widehat{\partial}\underline{c}\right)_j = \sum_{i=0}^\infty \partial_{i} c_{i+j},
\]
and the filtration is given by
\[
\F_p C =  \bigoplus_{i = 0}^p \widehat{CF}(\Sigma,\boldb,\bolda,\mathbf{z})_i.
\]
Note that here we deviate slightly from the definition of Kutluhan et al.~\cite{key-14}
in that we take the direct sum defining~$C$ over~$\N$ instead of~$\Z$, but as we shall see,
the arising notion of spectral order is exactly the same.

Recall that a filtered complex
\[
\dots \subseteq \F_{p-1}C \subseteq \F_p C \subseteq \F_{p+1} C \subseteq \dots
\]
induces a spectral sequence by setting
\[
\begin{split}
Z^k_p &= \{\, x \in \F_p C \,\colon\, \partial x \in \F_{p-k} C\,\} \text{ and} \\
B^k_p &= \F_p C \cap \partial\F_{p+k} C.
\end{split}
\]
For $k \in \N$, the \emph{$k$-page} is the complex $\left(E^k = \bigoplus_{p \in \Z} E^k_p, d^k \right)$,
where
\[
E^k_p = \frac{Z^k_p}{Z^{k-1}_{p-1} + B^{k-1}_p},
\]
and the differential $d^k \colon E^k_p \to E^k_{p-k}$ is induced by the differential $\partial$ on the complex~$C$.

For an open book decomposition $(S,\phi)$ supporting $\xi$, and a
collection of arcs $\underline{\mathbf{a}}$ on~$S$ containing a
basis, we denote the induced spectral sequence defined above by $E^{n}(S,\phi,\arca)$.
Then note that, for every $k \in \Z_{>0}$,
\[
Z^k_0(S,\phi,\arca) = \{\, (c_i)_{i \in \N} \,\colon\, c_i = 0 \text{ for $i > 0$ and } \partial_0 c_0 = 0 \,\}.
\]

Recall that the contact element $\EH(\xi) \in \HFh(-M)$ is defined as the homology class of the intersection point
\[
\x_\xi := (\arcb \cap \arca) \times \{1/2\} \in \mathbb{T}_{\beta} \cap \mathbb{T}_{\alpha},
\]
where $\arca \times \{1/2\}$ and $\arcb \times \{1/2\}$ are subsets of $\bolda$ and $\boldb$, respectively, by definition.
As there are no non-trivial pseudo-holomorphic disks emanating from $\x_\xi$
in $(\S,\boldb,\bolda)$ that contribute to $\widehat{\partial}_{\HF}$,
it follows that $\partial_i \x_\xi = 0$ for every $i \in \N$.
We often view $\x_\xi$ as an element of~$C$ supported in degree zero; i.e.,
as a sequence $(d_i)_{i \in \N}$ such that $d_0 = \x_\xi$ and $d_i = 0$ for $i > 0$.
As such, $\x_\xi \in Z^k_0(S,\phi,\arca)$ for every $k \in \N$.
The following is \cite[Definitions~2.1 and~2.2]{key-14}.

\begin{defn}
Let $(M,\xi)$ be a closed contact $3$-manifold.
We say that $\AT(S,\phi,\arca) = k$ if the distinguished generator
\[
\x_\xi \in \widehat{\CF}(\Sigma,\boldb,\bolda,\mathbf{z})_{0},
\]
viewed in degree~0, is nonzero in $E^{k}(S,\phi,\arca)$, and zero
in~$E^{k+1}(S,\phi,\arca)$. Then we define the \emph{spectral order of $(M,\xi)$} as
\[
\AT(M,\xi)=\min\left\{\, \AT(S,\phi,\arca)\,\colon\, (S,\phi) \mbox{ supports } \xi
\mbox{ and } \arca \subset S \mbox{ contains an arc basis}\,\right\} .
\]
\end{defn}

Implicit in the above definition is the choice of an almost complex structure~$J$ on $\text{Sym}^g(\S)$.
Kutluhan et al.~\cite[Proposition~3.1]{key-14} showed that $\AT(S,\phi,\arca, J)$ is independent
of~$J$, hence we suppress it from our notation throughout.

\begin{rem*}
The contact element $\x_{\xi}$, viewed in degree zero,
vanishes in $E^{k+1}(S,\phi,\arca)$ if and only if it is contained in
\[
B_{0}^k(S,\phi,\arca) = \F_0 C \cap \widehat{\partial} \F_k C = \widehat{\CF}(\S,\boldb,\bolda)_0 \cap
\widehat{\partial} \left( \bigoplus_{i = 0}^k \widehat{\CF}(\S,\boldb,\bolda)_i \right).
\]
This holds precisely if there exist elements $c_{i} \in \widehat{CF}(\Sigma,\boldb,\bolda,\mathbf{z})$
for $i \in \{\, 0,\dots,k \,\}$ such that
\begin{equation} \label{eqn:main}
\begin{split}
\sum_{i=0}^{k}\partial_{i} c_{i} &= \x_\xi, \text{ and} \\
\sum_{i=0}^{k-j}\partial_{i}c_{i+j} &= 0 \text{ for all } j > 0.
\end{split}
\end{equation}
Indeed, if we set $c_i = 0$ for $i > k$, then the entries of $\widehat{\partial}(c_i)_{i \in \N}$
correspond to the left-hand side of equation~\eqref{eqn:main}, and so
this equation translates to $\widehat{\partial}(c_i)_{i \in \N} = (d_j)_{j \in \N}$,
where $d_0 = \x_{\xi}$ and $d_j = 0$ for $j > 0$. As equation~\eqref{eqn:main}
coincides with the one defining $\mathcal{B}^k(S,\phi,\arca)$ in~\cite[p.~5]{key-14},
it follows that it does not matter whether we take the direct sum over $\N$ or $\Z$
when we define~$\AT$.
\end{rem*}

Before extending this definition to manifolds with boundary, we first
review the definition of partial open book decompositions, introduced
by Honda, Kazez, and Mati\'{c}~\cite{key-2}. We follow the treatment
of Etgu and Ozbagci~\cite{key-4}. An abstract partial open book
decomposition is a triple $\mbox{\ensuremath{\mathcal{P}}}=(S,P,h)$,
where
\begin{itemize}
\item $S$ is a compact, oriented, connected surface with nonempty boundary,
\item $P=P_{1}\cup\dots\cup P_{r}$ is a proper subsurface of $S$ such
that $S$ is obtained from $\overline{S\setminus P}$ by successively
attaching $1$-handles $P_{1},\dots,P_{r}$,
\item $h:P\rightarrow S$ is an embedding such that $h|_{A}=\mbox{Id}_{A}$,
where $A=\partial P\cap\partial S$.
\end{itemize}
Given a partial open book decomposition $(S,P,h)$, we associate to
it a sutured $3$-manifold $(M,\Gamma)$, as follows. Let $H=S\times[-1,0]/{\sim}$,
where $(x,t)\sim(x,t')$ for every $x \in \partial S$ and $t$, $t'\in[-1,0]$.
Furthermore, let $N = P \times I/{\sim}$, where $(x,t)\sim(x,t')$ for
every $x\in A$ and $t$, $t'\in I$. We obtain the manifold~$M$
by gluing $(x,0)\in\partial N$ to $(x,0)\in\partial H$ and $(x,1)\in\partial N$
to $(h(x),-1)\in\partial H$ for every $x\in P$. The sutures are
defined as
\[
\Gamma=(\overline{\partial S\setminus\partial P})\times\{0\}\cup-(\overline{\partial P\setminus\partial S})\times\{1/2\}.
\]
Then
\[
\Sigma=(P\times\{0\}\cup-S\times\{-1\})/\sim
\]
is a Heegaard surface for $(M,\Gamma)$.

Let $\xi$ be a contact structure on $M$ such that $\partial M$
is convex with dividing set $\Gamma$. Similarly to the original Giroux
correspondence, we say that~$\xi$ is compatible with the partial
open book decomposition $(S,P,h)$ if
\begin{itemize}
\item $\xi$ is tight on the handlebodies $H$ and $N$,
\item $\partial H$ is a convex surface with dividing set $\partial S\times\{0\}$,
\item $\partial N$ is a convex surface with dividing set $\partial P\times\{1/2\}$.
\end{itemize}
Then the relative Giroux correspondence theorem says that $\xi$ is
uniquely determined up to contact isotopy, and given such a contact
structure $\xi$, any two partial open book decompositions compatible
with $\xi$ are related by positive stabilizations.

We now extend the definition of spectral order to manifolds with
boundary. Suppose that a contact $3$-manifold $(M,\xi)$ with convex
boundary $\partial M$ and dividing set $\Gamma$ is given. Then $(M,\Gamma)$
is a balanced sutured manifold if $M$ has no closed components. Indeed,
every convex surface has a non-empty dividing set. 
Furthermore, $\chi(R_{+}(\Gamma))=\chi(R_{-}(\Gamma))$
by \cite[Proposition~3.5]{polytope}. Then we have a compatible partial
open book decomposition $\mathcal{P} = (S,P,h)$.
An arc basis for $(S,P,h)$ is a set $\arca$ of
properly embedded arcs in~$P$ with endpoints on~$A$ such that
$S\setminus\arca$ deformation retracts onto $\overline{S\setminus P}$.
Similarly to the closed case, a partial open book decomposition of
$M$, together with a collection of pairwise disjoint arcs $\arca$
containing a basis and an appropriate choice of basepoints, gives
a multipointed sutured Heegaard diagram $(\Sigma,\bolda,\boldb,\mathbf{z})$
of $(M,\Gamma)$. Here, $\mathbf{z}$ consists of a basepoint in each component
of $P \setminus \arca$ disjoint from $\partial P \setminus \partial S$.

The differential~$\widehat{\partial}_{\SFH}$ of the sutured Floer chain complex counts the number of J-holomorphic
curves $u$ with $\mu(u)=1$, modulo the $\mathbb{R}$-action, that
do not intersect the suture $\Gamma=\partial\Sigma$ and the basepoints~$\z$. For any topological
Whitney disk $\psi$ from $\mathbf{x}\in\mathbb{T}_{\alpha}\cap\mathbb{T}_{\beta}$
to $\mathbf{y}\in\mathbb{T}_{\alpha}\cap\mathbb{T}_{\beta}$ that
does not intersect $\partial\Sigma$ and~$\z$, we define the number $J_{+}(\psi)$
as in the closed case by
\[
J_{+}(\psi)=n_{\mathbf{x}}(D)+n_{\mathbf{y}}(D)-e(D)+|\mathbf{x}|-|\mathbf{y}|,
\]
where $D=D(\psi)$ is the domain of $\psi$. Since the equality $e(D)=1-n_{\mathbf{x}}(D)-n_{\mathbf{y}}(D)$
for $\mu(D)=1$ still holds in the sutured case, we get that
\begin{equation}
J_{+}(\psi)=2(n_{\mathbf{x}}(D)+n_{\mathbf{y}}(D))-1+|\mathbf{x}|-|\mathbf{y}|\label{eqn:J}
\end{equation}
when $\mu(\psi)=1$. As in the closed case, the function $J_+$ is
clearly additive, and the same argument as in~\cite[Section~2.2]{key-14} shows
that $J_+(u)$ is a non-negative even integer for any $J$-holomorphic disk~$u$.

Hence, we can split the sutured Floer differential $\widehat{\partial}_{\SFH}$
as
\[
\widehat{\partial}_{\SFH} = \partial_{0} + \partial_{1}+\cdots,
\]
where $\partial_{r}$ counts J-holomorphic disks~$u$ with $\mu(u)=1$
and $J_{+}(u)=2r$.

Just like in the closed case, the pair
$\left(\bigoplus_{i \in \N} CF(\Sigma,\boldb,\bolda,\mathbf{z})_i \text{, } \widehat{\partial}\right)$,
where the map $\widehat{\partial}$ is defined as
\[
\widehat{\partial}(c_{i})_{i \in \N} = \left(\sum_{i=0}^\infty \partial_{i} c_{i+j}\right)_{j \in \N},
\]
is a filtered chain complex.
Using its induced spectral sequence, we can define the spectral order of $(M,\xi)$ in the following way.

\begin{defn}
For a contact $3$-manifold $(M,\xi)$ with
convex boundary, a compatible partial open book decomposition $\mathcal{P}$,
and a collection of pairwise disjoint arcs $\arca$ containing an arc basis, denote the induced
spectral sequence by $E^k(\mathcal{P},\underline{\mathbf{a}})$.
We say that $\AT(\mathcal{P},\arca)=k$ if the distinguished generator
$\x_\xi \in \CF(\S,\boldb,\bolda,\mathbf{z})_{0}$ in degree~0 remains
nonzero in~$E^k(\mathcal{P},\arca)$, but vanishes
in $E^{k+1}(\mathcal{P},\arca)$. Then we define the \emph{spectral order}
\[
\AT(M,\xi)= \min\left\{ \AT(\mathcal{P},\arca)\,\colon\,\mathcal{P}\mbox{ supports $\xi$ 
and }\arca \mbox{ contains a basis}\right\} .
\]
This is always a nonnegative integer.
\end{defn}
 
We will need the following lemma for for the proof of Theorem~\ref{thm:ineq}.
  
\begin{lem}\label{lem:ball}
Let $(M, \xi)$ be a contact $3$-manifold with (possibly empty) convex boundary,
and suppose that $B \subset \Int(M)$ is a tight contact ball. If $M_0 = M \setminus \Int(B)$
and $\xi_0 = \xi|_{M_0}$, then 
\[
\AT(M_0,\xi_0) \ge \AT(M,\xi).
\]
Furthermore, we have equality if $M$ is closed.
\end{lem}

\begin{proof}
  We denote by $\Gamma_B$ the dividing set of $\xi$ on $\partial B$.
  Let $\cP_0 = (S_0,P_0,h_0)$ be a partial open book decomposition of $(M_0,\gamma_0)$ 
  supporting $\xi_0$, together with a collection of arcs $\arca_0$ containing a basis,
  and write $(\S_0, \boldb_0, \bolda_0, \z_0)$ for the corresponding based sutured diagram
  of $(-M_0, -\Gamma_0)$.
    
  There is a disk component $D_B$ of $\overline{S_0 \setminus P_0}$ corresponding
  to $R_+(\Gamma_B)$, and a disk component $D_B'$ of $\overline{S \setminus h(P)}$
  corresponding to $R_-(\Gamma_B)$. Then $h$ uniquely extends to a diffeomorphism 
  \[
  h \colon P \cup D_B \to h(P) \cup D_B', 
  \]
  up to isotopy. If we set $S = S_0$ and $P = P \cup D_B$,
  then $\cP := (S,P,h)$ is a partial open book of $(M,\xi)$. Furthermore, $\arca = \arca_0$
  contains an arc basis for $\cP$. As $D_B$ lies in a component
  of $P \setminus \arca$ disjoint from $\partial P \setminus \partial S$,
  we need to add a basepoint $z_B$ here. The based diagram $(\S,\boldb,\bolda,\z)$ corresponding to $(\cP, \arca)$
  is obtained by filling in a boundary component of $\S_0$ with the disk $D_B \times \{0\}$, 
  and taking $\z = \z_0 \cup \{z_B\}$. Hence, $\AT(\cP,\arca) = \AT(\cP_0,\arca_0)$,
  as their defining filtered chain complexes agree. 
  It follows that $\AT(M_0,\xi_0) \ge \AT(M,\xi)$.
  
  Now suppose that $M$ is closed. Let $(S,h)$ be an arbitrary open book
  decomposition of $(M,\xi)$, and $\arca$ an arbitrary collection of arcs containing a basis.
  Then each component of $S \setminus \arca$ is homeomorphic to a disk; let $D$ be one of them.
  Consider the partial open book $\cP_0 = (S_0, P_0, h_0)$, where $S_0 = S$,
  $P_0 = S \setminus D$, and $h_0 = h|_{P_0}$. Then $\cP_0$ supports $(M_0,\xi_0)$.
  If $(\S,\boldb,\bolda,\z)$ is the diagram arising from $(S,h,\arca)$, and 
  $(\S_0,\boldb_0,\bolda_0,\z_0)$ is the diagram arising from $(\cP_0,\arca)$,
  then $\S = \S_0 \cup (D \times \{0\})$, $\boldb = \boldb_0$, $\bolda = \bolda_0$,
  and we obtain $\z_0$ by removing the unique point $\z \cap D$. Hence, 
  $\AT(S,h,\arca) = \AT(\cP_0,\arca)$. Since $(S,h,\arca)$ was arbitrary,
  we obtain that $\AT(M,\xi) \ge \AT(M_0,\xi_0)$. 
\end{proof}

Using Theorem~\ref{thm:ineq}, we will show that actually equality holds
in the first part of Lemma~\ref{lem:ball}.

\section{Inequality of spectral orders} \label{sec:ineq}

The goal of this section is to prove Theorem~\ref{thm:ineq} from the introduction.
We first briefly recall the construction of the contact gluing map $\Phi$ on sutured Floer homology,
defined by Honda, Kazez, and Mati\'c~\cite{key-2}.
Let $(M, \Gamma_M)$ be a sutured manifold, and let $(N, \Gamma_N)$ be a sutured submanifold of~$\text{Int}(M)$.
Furthermore, let $\xi$ be a contact structure on $M \setminus \text{Int}(N)$
with convex boundary and dividing set~$\Gamma_M$ on $\partial M$ and dividing set~$\Gamma_N$ on $\partial N$.
We can suppose that $M \setminus N$ has no isolated components; i.e., every
component of $M \setminus N$ intersects~$\partial M$. Indeed, by Lemma~\ref{lem:ball},
if we remove a tight contact ball from each isolated component, $\AT(M,\xi)$ does not decrease.

Choose a collar neighborhood $Z \simeq \partial N \times I$ of~$\partial N$
in~$M \setminus \text{Int}(N)$ such that $Z \cap N = \partial N \times \{0\}$,
on which the contact structure $\xi$ is $I$-invariant, and write $N' = M \setminus \text{Int}(N \cup Z)$.
Let $\Sigma_{N'}$ be a Heegaard surface
compatible with $\xi|_{N'}$, and let~$\Sigma_Z$ be a Heegaard
surface compatible with $\xi|_Z$. Then, for any sutured Heegaard
diagram $\H = (\Sigma,\boldb,\bolda)$
of $(N,\Gamma_N)$ that is contact-compatible near $\partial N$ in
the sense of Honda, Kazez, and Mati\'c~\cite{key-2}, the union $\Sigma\cup\Sigma_Z \cup \Sigma_{N'}$
is a Heegaard surface for~$(M,\Gamma_M)$, and we can complete $\bolda$ and $\boldb$ to attaching
sets of $(M,\Gamma_M)$ by adding $\bolda'$ and $\boldb'$ compatible with~$\xi|_{N' \cup Z}$. We write
\[
\H' = (\Sigma\cup\Sigma_Z \cup\Sigma_{N'},\boldb\cup\boldb',\bolda\cup\bolda').
\]
Then the map
\begin{eqnarray*}
\Phi_\xi \colon \CF(\H) & \rightarrow & \CF(\H'),\\
\y & \mapsto & (\y,\x')
\end{eqnarray*}
is a chain map, where $\x' \in \T_{\b'} \cap \T_{\a'}$
is the canonical representative of the contact class $\EH(\xi|_{N' \cup Z})$.
Note that this construction makes sense even if we replace Heegaard diagrams
with multipointed Heegaard diagrams.


\begin{proof}[Proof of Theorem~\ref{thm:ineq}]
As in the statement of Theorem~\ref{thm:ineq},
let $(M,\xi)$ be a contact $3$-manifold with convex boundary and dividing set~$\Gamma_M$,
and let $N$ be a codimension zero submanifold of~$\text{Int}(M)$, also with convex boundary and dividing set~$\Gamma_N$.
Then let $\mathcal{P}_{N} = (S_{N},P_{N},h_{N})$ be a partial open book decomposition of $(N,\xi|_{N},\Gamma_{N})$,
together with a choice of an arc basis~$\arca_N$, and let $\H = (\Sigma,\boldb,\bolda)$
be the corresponding diagram of $(-N,-\Gamma_N)$. 

Let $\zeta$ be an $I$-invariant contact structure on $\partial N \times I$ such that 
$\partial N \times \{t\}$ is convex with dividing set $\Gamma_N$ for every $t \in I$. 
According to the Remark after \cite[Lemma~4.1]{key-2}, we can first extend $\H$
to a diagram of $(-N \cup (\partial N \times [0,2]), -\Gamma_N \times \{2\})$ that is contact 
compatible near $\partial N \times \{2\}$, by gluing two Heegaard surfaces 
arising from certain special partial open book decompositions
of $(\partial N \times I, \zeta)$. We denote the resulting compact compatible diagram
$(\S \cup \S_\zeta, \boldb \cup \boldb_\zeta, \bolda \cup \bolda_\zeta)$. 
Then, using Step~2 of \cite[Section~4]{key-2}, and as explained above,
we can further extend this to a diagram
\[
\H' = (\S \cup \S_\zeta \cup \S_Z \cup \S_{N'}, \boldb \cup \boldb_\zeta \cup \boldb', \bolda \cup \bolda_\zeta \cup \bolda')
\] 
of $(-M,-\Gamma_M)$.
Analogously to the gluing map, we obtain a chain map
\begin{eqnarray*}
\Phi \colon \CF(\H) & \rightarrow & \CF(\H'),\\
\y & \mapsto & (\y,\x')
\end{eqnarray*}
where $\x' \in \T_{\b_\zeta \cup \b'} \cap \T_{\a_\zeta \cup \a'}$
is the canonical representative of the contact class $\EH(\xi|_{(\partial N \times [0,2]) \cup Z \cup N'})$.
As $\H$ is not necessarily contact compatible, we do not claim that $\Phi$ is the contact gluing map
under naturality, but this is not necessary for our purposes. 
By construction, $\Phi$ maps the contact class~$\x_{\xi|_N}$ to the contact class~$\x_\xi$.
Note that this construction of Honda, Kazez, and Mati\'c~\cite{key-2}
actually gives a partial open book $\mathcal{P} = (S,P,h)$ supporting $(M,\xi)$
and an arc basis~$\arca$ that extend $\mathcal{P}_N$ and~$\arca_N$, respectively.

Now consider the case when $\arca_N$ is not an
arc basis, but a collection of pairwise disjoint arcs that contains
an arc basis. Then we need to choose basepoints $\mathbf{z}$ such that
every connected component of $P_N \setminus \cup \arca_N$
that does not intersect $\partial P_N \setminus \partial S_N$ has exactly
one basepoint. The gluing process can be applied to this case without
modification, to get a collection of pairwise disjoint arcs~$\arca$
in~$P$. After gluing, every connected component of $P \setminus \cup \arca$
disjoint from $\partial P \setminus \partial S$
contains exactly one basepoint, since such a component must come from
$P_N \setminus \cup \arca_N$, and other components do not
contain a basepoint. Hence, the data $(\mathcal{P},\arca,\mathbf{z})$
satisfies the conditions needed to define its order. The proof of the
fact that the gluing map is a chain map between Floer chain complexes~\cite{key-2}
also applies to this case without further modification, for the same reason.

\begin{lem} \label{lem:morphism}
Let $\Phi$ be as above. Then the map
\[
\ol{\Phi} \colon \bigoplus_{i \in \N} \CF(\H)_i \to \bigoplus_{i \in \N} \CF(\H')_i
\]
defined by $\ol{\Phi}\left((c_i)_{i \in \N}\right) = (\Phi(c_i))_{i \in \N}$
is a filtered chain map, hence induces a morphism $(\Phi^k)_{k \in \N}$
of spectral sequences; i.e., $\Phi^0 = \ol{\Phi}$, and
\[
\Phi^k \colon E^k(\mathcal{P}_{N},\arca_{N}) \rightarrow E^k(\mathcal{P},\arca)
\]
is a chain map for every $k \in \N$ such that the map induced on homology is $\Phi^{k+1}$.
\end{lem}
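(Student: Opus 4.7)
The plan is to split the lemma into two independent tasks: (i) establishing that $\ol{\Phi}$ is a filtered chain map between the filtered complexes $\bigoplus_{i \in \N}\CF(\H)_i$ and $\bigoplus_{i \in \N}\CF(\H')_i$, and (ii) invoking the purely formal fact that a filtered chain map between filtered complexes induces a sequence of morphisms $(\Phi^r)_{r \in \N}$ of the associated spectral sequences with $\Phi^0 = \ol{\Phi}$ and $\Phi^{r+1}$ the map induced on homology by $\Phi^r$. Part~(ii) is standard. The filtration-preserving half of~(i) is also immediate: if $(c_i)_{i \in \N} \in \F_p C$, so that $c_i = 0$ for $i > p$, then $\Phi(c_i) = 0$ in the same range, hence $\ol{\Phi}((c_i)_{i \in \N}) \in \F_p C'$.

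The real content is the chain-map identity $\widehat{\partial}\,\ol{\Phi} = \ol{\Phi}\,\widehat{\partial}$. Unwinding the definition of $\widehat{\partial}$ and comparing the two sides componentwise in $\N$, I would reduce this to the family of identities $\Phi \circ \partial_i^N = \partial_i^M \circ \Phi$ for every $i \in \N$, where $\partial_i^N$ and $\partial_i^M$ are the pieces of the sutured Floer differentials on $\H$ and $\H'$ counting $J$-holomorphic curves with $J_+ = 2i$. Note that this is strictly stronger than the ordinary chain-map property for $\Phi$ established in \cite{key-2}, which only yields the sums $\sum_i \partial_i^M \Phi = \sum_i \Phi\, \partial_i^N$. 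To prove the refined version, I would use the contact-compatibility of the gluing construction to match holomorphic curves from $(\y,\x')$ to $(\y',\x')$ in $\H'$ with curves from $\y$ to $\y'$ in $\H$: since $\x'$ is the canonical contact-class generator of the complement and admits no nontrivial outgoing holomorphic curves, any curve in the large diagram is constant on the $\x'$ factor, and its domain is supported entirely in the small-diagram region.

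The main obstacle is verifying that this bijection of disks preserves $J_+$. Expanding
\[
J_+(D) = n_{\x}(D) + n_{\y}(D) - e(D) + |\x| - |\y|,
\]
I would check each term on the paired domains $D^N$ and $D^M$ separately. The multiplicities $n_{(\y,\x')}(D^M)$ coincide with $n_{\y}(D^N)$, and likewise at $\y'$, since $D^M$ vanishes in every region touching $\x'$; the Euler measure satisfies $e(D^M) = e(D^N)$ because $D^M$ is supported on the small subsurface; and the cycle counts obey $|(\y,\x')| - |(\y',\x')| = |\y| - |\y'|$, because the matching imposed by $\x'$ on the additional $\a'$--$\b'$ attaching circles is identical at both endpoints of the disk and contributes the same number of cycles at each end. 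Summing gives $J_+(D^M) = J_+(D^N)$, completing the refined chain-map identity. Part~(ii) of the plan then produces the induced morphism $(\Phi^r)_{r \in \N}$ of spectral sequences with the required properties.
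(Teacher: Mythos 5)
Your proposal is correct and follows essentially the same route as the paper: reduce the filtered-chain-map property to the termwise identity $\Phi\,\partial_r^N = \partial_r^M\,\Phi$, establish it by observing that disks in $\mathcal{H}'$ between $(\mathbf{y},\mathbf{x}')$ and $(\mathbf{z},\mathbf{x}')$ have domains supported in $\Sigma$ (citing the gluing theory of Honda--Kazez--Mati\'c), and then check that $J_+$ is preserved because the $n$-terms, Euler measure, and cycle counts all transfer unchanged, with the $|\mathbf{x}'|$ contribution cancelling in the difference $|(\mathbf{y},\mathbf{x}')| - |(\mathbf{z},\mathbf{x}')|$. The only cosmetic difference is that you verify $J_+$ term by term from the general formula rather than first fixing the Maslov index and using the $\mu=1$ form of $J_+$, as the paper does.
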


\begin{proof}
Let $\x$, $\y \in \T_\b \cap \T_\a$. Any holomorphic disk~$u$ from
$(\x,\x^{\prime})$ to $(\y,\x^{\prime})$ in $\CF(\H')$
is actually a holomorphic disk from $\x$ to $\y$ in $\CF(\H)$;
i.e., its domain $D := D(u)$ is zero outside $\Sigma$; see~\cite[p.~12]{key-2}.
Since the Euler measure and the point measure of $D$ depend only on the non-zero coefficients,
the Maslov index of $u$ in $\H$ and in $\H'$ are the same. Suppose that $\mu(u) = 1$.
Then, in $\H'$, we have
\begin{eqnarray*}
J_{+}(u) & = & 2(n_{(\x,\x^{\prime})}(D)+n_{(\y,\x^{\prime})}(D)) -
1 + |(\x,\x^{\prime})|-|(\y,\x^{\prime})|\\
 & = & 2(n_{\x}(D)+n_{\y}(D))-1+|\x|+|\x^{\prime}|-|\y|-|\x^{\prime}|\\
 & = & 2(n_{\x}(D)+n_{\y}(D))-1+|\x|-|\y|.
\end{eqnarray*}
This is the same as the value of $J_{+}(u)$ in $\H$.
Hence $\Phi$ preserves the $J_{+}$ filtration.

Now, by the definition of the differential $\partial_i$,
the map $\Phi$ commutes with $\partial_i$ for all $i \in \N$.
Hence, it commutes with the total differential $\widehat{\partial}$,
and so $\ol{\Phi}$ is a filtered chain map.
Therefore $\ol{\Phi}$ induces a morphism $(\Phi^k)_{k \in \N}$
between the corresponding spectral sequences.
\end{proof}

Since $\Phi(\x_{\xi|_N}) = \x_{\xi}$, Lemma~\ref{lem:morphism} implies that if
$\x_{\xi|_N}$ vanishes in $E^k(\mathcal{P}_{N},\arca_{N})$, then it also vanishes in $E^k(\mathcal{P},\arca)$.
Hence, by the definition of the spectral order,
\[
\AT(\mathcal{P}_{N},\underline{\mathbf{a}}_{N})\ge \AT(\mathcal{P},\underline{\mathbf{a}})\ge \AT(M,\xi).
\]
Taking the minimum of over all possible choices of $(\mathcal{P}_{N},\underline{\mathbf{a}}_{N})$,
we get that
\[
\AT(N,\xi|_{N})\ge \AT(M,\xi),
\]
as required. This concludes the proof of Theorem~\ref{thm:ineq}.
\end{proof}

We are now in a position to strengthen Lemma~\ref{lem:ball}.

\begin{cor}
Let $(M, \xi)$ be a connected contact $3$-manifold with (possibly empty) convex boundary,
and suppose that $B \subset \Int(M)$ is a tight contact ball. If $M_0 = M \setminus \Int(B)$
and $\xi_0 = \xi|_{M_0}$, then
\[
\AT(M_0,\xi_0) = \AT(M,\xi).
\]
\end{cor}

\begin{proof}
  We have already shown the closed case in Lemma~\ref{lem:ball},
  so we can suppose that $\partial M \neq \emptyset$. Let $M' \subset \Int(M)$
  be a codimension zero submanifold of $M$ with convex boundary,
  such that $(M',\xi')$ is contactomorphic to $(M,\xi)$, where $\xi' = \xi|_{M'}$. 
  Since $M$ is connected, we can assume that $B \subset \Int(M) \setminus M'$. 
  If we apply Theorem~\ref{thm:ineq} to the sequence $M' \subset M_0 \subset M$,
  we obtain that 
  \[
  \AT(M',\xi') \ge \AT(M_0,\xi_0) \ge \AT(M,\xi).
  \]
  As $(M,\xi)$ and $(M',\xi')$ are contactomorphic, $\AT(M,\xi) = \AT(M',\xi')$,
  and the result follows. 
\end{proof}

\section{Calculation of upper bounds on some spectral orders} \label{sec:gt}

Let $(M,\xi)$ be a contact $3$-manifold with convex boundary. Suppose that $(M,\xi)$
is overtwisted. Then, by definition, it contains an embedded overtwisted
disk $\Delta$. This has a standard neighborhood; i.e., there exists
a neighborhood $U\supset\Delta$ such that $(U,\xi|_{U})$ is contactomorphic
to a neighborhood of the disk $\Delta_{std}=\{z=0,\rho\le\pi\}$ inside
the standard overtwisted contact structure on $\mathbb{R}^{3}$, which is
defined as follows~\cite{key-8}:
\[
\xi_{OT}=\ker(\cos\rho \, dz+\rho\sin\rho\, d\phi).
\]
 Inside $U$, we can perturb $\Delta$ to a convex surface $D$. Take
a neighborhood $V=D\times [-1,1]$ such that $\xi|_{\text{Int}(M)}$ is $\mathbb{R}$-invariant.
After rounding its edges, we obtain an open subset $V_{0}\simeq D^3$
such that the dividing set $\Gamma_{V_{0}}$ on $\partial V_{0}$
is given by three disjoint curves. Honda, Kazez, and Mati\'c~\cite[Example~1]{key-1} gave a partial open
book decomposition of $N=\overline{V}_0$, and the corresponding Heegaard diagram is shown in Figure~\ref{fig:ot}.
\begin{figure}
\includegraphics{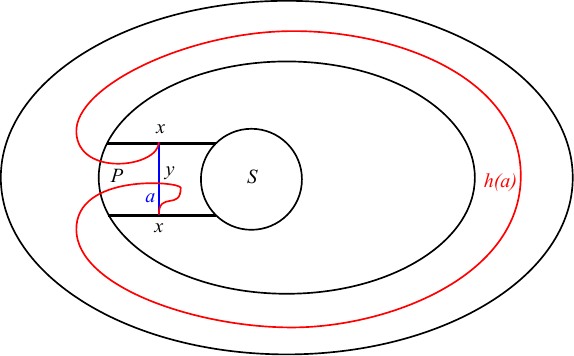}
\caption{A sutured Heegaard diagram arising from a partial open book decomposition of a neighborhood of an overtwisted disk.
We obtain the Heegaard surface by identifying the two bold horizontal arcs.}
\label{fig:ot}
\end{figure}
This diagram can be used to show that $\AT(M,\xi)=0$,
which was proven by Kutluhan et al.~\cite{key-10} in the closed case using the fact that an overtwisted
contact structure admits an open book whose monodromy is not right-veering.

\begin{rem}
It is convenient and customary to present the sutured diagram $(\S,\boldb,\bolda)$ 
arising from a partial open book decomposition $(S,P,h)$ and arcs basis $\arca$ 
on the surface $-S \times \{0\} \subset \S$.
Instead of gluing in $P \times \{0\}$, for each $a \in \arca$, we identify the opposite edges
of $N(a) \cap \partial S$ for a regular neighborhood $N(a)$ of $a$ in $S$. 
This is possible since $P = N(\arca)$.
\end{rem}

\begin{prop} \label{prop:ot}
If~$N$ is the standard neighborhood of an overtiwsted disk in the contact manifold $(M,\xi)$ as above,
then
\[
\AT(N,\xi|_{N})=0.
\]
\end{prop}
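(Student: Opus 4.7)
The plan is to exhibit a specific partial open book $\mathcal{P}$ and arc collection~$\arca$ such that $\EH(\xi|_N)$ lies in the image of~$\partial_0$; by the remark following the definition of~$\AT$, applied with $k=0$, this forces $\AT(\mathcal{P},\arca) = 0$, and hence $\AT(N,\xi|_N) = 0$.

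First I would take the partial open book decomposition of $\ol{V_0}$ given by Honda, Kazez, and Mati\'c~\cite{key-1}, whose induced multipointed sutured Heegaard diagram $\H = (\S,\boldb,\bolda,\z)$ is the one depicted in Figure~\ref{fig:ot}. In this diagram I would identify the contact generator $\EH(\xi|_N) = (\arcb \cap \arca)\times\{1/2\}$, together with a second intersection tuple $\y \in \T_\a \cap \T_\b$ differing from $\EH(\xi|_N)$ in exactly one coordinate, such that $\y$ and $\EH(\xi|_N)$ cobound an embedded bigon domain~$D$ in~$\H$ whose interior misses the basepoints~$\z$. By the Riemann mapping theorem, $D$ has Maslov index one and a unique (up to the $\R$-action) holomorphic representative, which is counted in the sutured Floer differential.

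A direct application of formula~\eqref{eqn:J} then shows that $J_+(D) = 0$: for an embedded bigon the two corner averages contribute $n_{\y}(D) + n_{\EH(\xi|_N)}(D) = \tfrac{1}{2}$, and since $\y$ and $\EH(\xi|_N)$ differ in a single component lying on the same $\a$- and $\b$-curve, the associated permutations in $S_n$ share the same cycle type, giving $|\y| = |\EH(\xi|_N)|$. Hence $D$ contributes to the $\partial_0$ part of the sutured Floer differential on~$\y$.

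The main obstacle is verifying that $D$ is the \emph{only} positive Maslov index one domain out of~$\y$ avoiding the basepoints, so that $\partial_0 \y = \EH(\xi|_N)$ on the nose rather than modulo additional terms. I would settle this by a direct combinatorial inspection of Figure~\ref{fig:ot}, using positivity of domains together with the fact that the diagram has only a small number of intersection points, so that all candidate domains can be enumerated. Once this is established, $\EH(\xi|_N) \in \partial_0\,\widehat{\CF}(\S,\boldb,\bolda,\z)_0$, so $\EH(\xi|_N)$ already vanishes in $E^1(\mathcal{P},\arca)$; combined with the fact that $\AT$ is a nonnegative integer, this yields $\AT(\mathcal{P},\arca) = 0$ and therefore $\AT(N,\xi|_N) = 0$.
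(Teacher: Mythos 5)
Your proposal is correct and follows essentially the same route as the paper: exhibit the Honda--Kazez--Mati\'c partial open book for the standard neighborhood, locate the bigon from the generator~$y$ to the contact generator~$x = \EH(\xi|_N)$, check $J_+ = 0$ via the $n_\x + n_\y = \tfrac12$ and cycle-count computations, and conclude $\EH(\xi|_N)$ already dies in $E^1$. The paper's argument is just a more compressed version of the same, leaning on HKM's observation that $\partial y = x$ where you instead propose to verify by hand that the bigon is the only contributing domain.
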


\begin{proof}
Honda, Kazez, and Mati\'c~\cite[Example~1]{key-1} computed that $c(N,\xi|_{N}) = 0$;
we extend their proof. Consider the partial open book decomposition
of $(N,\xi)$ shown in Figure~\ref{fig:ot}.
The contact element $\EH(N,\xi|_{N})$ is represented by the point~$x$,
which is zero in homology because $\partial y = x$. The only
J-holomophic curve from $y$ to $x$ is the bigon, which
satisfies $J_{+}=0$. Hence $\AT(N,\xi|_{N})\le0$.
\end{proof}

\begin{thm} \label{thm:OT}
If the contact manifold $(M,\xi)$ with convex boundary is overtwisted, then $\AT(M,\xi)=0$.
\end{thm}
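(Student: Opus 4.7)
The plan is to combine Proposition~\ref{prop:ot} with Theorem~\ref{thm:ineq} in a one-line deduction, with all the substantive work already done. Since $(M,\xi)$ is overtwisted, by definition there is an embedded overtwisted disk $\Delta \subset M$. The paragraph preceding Proposition~\ref{prop:ot} constructs, starting from~$\Delta$, a standard neighborhood $U \supset \Delta$, then a perturbation of~$\Delta$ to a convex disk $D$, an $\R$-invariant thickening $V = D \times [-1,1]$, and finally an edge-rounded open subset $V_0 \simeq D^3$ whose closure $N = \overline{V_0}$ is a codimension zero submanifold of $\operatorname{Int}(M)$ with convex boundary. So $(N,\xi|_N)$ fits the hypotheses of Theorem~\ref{thm:ineq}.

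With this in hand, I would simply chain the two results:
\[
\AT(M,\xi) \le \AT(N,\xi|_N) = 0,
\]
where the inequality is Theorem~\ref{thm:ineq} applied to $N \subset M$, and the equality is Proposition~\ref{prop:ot}. Because $\AT$ takes values in $\N \cup \{\infty\}$, this forces $\AT(M,\xi) = 0$.

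The only thing that needs even minor care is checking that the neighborhood~$N$ can be chosen to lie strictly in $\operatorname{Int}(M)$, so that Theorem~\ref{thm:ineq} applies as stated; this is immediate because an overtwisted disk is a compact subset of~$M$, hence after a small isotopy we may assume $\Delta$ (and then its tubular neighborhoods $U$, $V$, $V_0$, all of which can be shrunk freely) sits inside $\operatorname{Int}(M)$. There is no real obstacle: the content of the theorem has been fully absorbed into Proposition~\ref{prop:ot} (the local computation from Figure~\ref{fig:ot}) and Theorem~\ref{thm:ineq} (monotonicity under contact embeddings via the gluing map), so the argument is essentially a formal corollary.
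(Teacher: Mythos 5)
Your proposal is exactly the paper's proof: apply Theorem~\ref{thm:ineq} to the standard neighborhood~$N$ of an overtwisted disk and invoke Proposition~\ref{prop:ot}. The remark about shrinking $N$ into $\operatorname{Int}(M)$ is a sensible small check that the paper leaves implicit.
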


\begin{proof}
We have $\AT(M,\xi)\le \AT(N,\xi|_{N})=0$ by Theorem~\ref{thm:ineq} and Proposition~\ref{prop:ot}.
\end{proof}

We now consider the case when $(M,\xi)$ has Giroux $2\pi$-torsion.
Recall that a contact manifold $(M,\xi)$ has $2\pi$-torsion if it
admits an embedding
\[
(M_{2\pi},\eta_{2\pi})=(T^{2}\times[0,1],\ker(\cos(2\pi t)\,dx-\sin(2\pi t)\,dy))\hookrightarrow(M,\xi).
\]
The boundary of $(M_{2\pi},\eta_{2\pi})$ is not convex. However, as in \cite[Lemma~5]{key-5},
if it embeds in $(M,\xi)$, then there exist small $\epsilon_0$, $\epsilon_1 >0$
such that the slightly extended domain
\[
(M',\eta')= \left(T^{2}\times[-\epsilon_0,1+\epsilon_1],\ker(\cos(2\pi t)\,dx-\sin(2\pi t)\,dy) \right)
\]
also embeds inside $(M,\xi)$ such that $T^2 \times \{-\epsilon_0\}$ and $T^2 \times \{\epsilon_1\}$
are pre-Lagrangian tori with integer slopes $s_0$ and $s_1$ that form a basis of $H_1(T^2)$.
By the work of Ghiggini~\cite{key-3}, we can perturb $\partial M'$
to get a new contact submanifold $\widetilde{M}$
such that $\partial\widetilde{M}$ is convex, and the slopes of the
dividing sets are $s_0$ and $s_1$. After a change of coordinates in $\widetilde{M}$,
we can assume these slopes are $0$ and $\infty$.

The contact manifold $\widetilde{M}$ is non-minimally-twisting and consists of five basic slices,
which means that we can construct a partial open book decomposition
of it by attaching four bypasses to a partial open book diagram of
a basic slice, which can be found in Examples~4, 5, and~6 of~\cite{key-1}.
The diagram we get is shown in Figure~\ref{fig:gt}.
\begin{figure}
\resizebox{.7\textwidth}{!}{%
\includegraphics{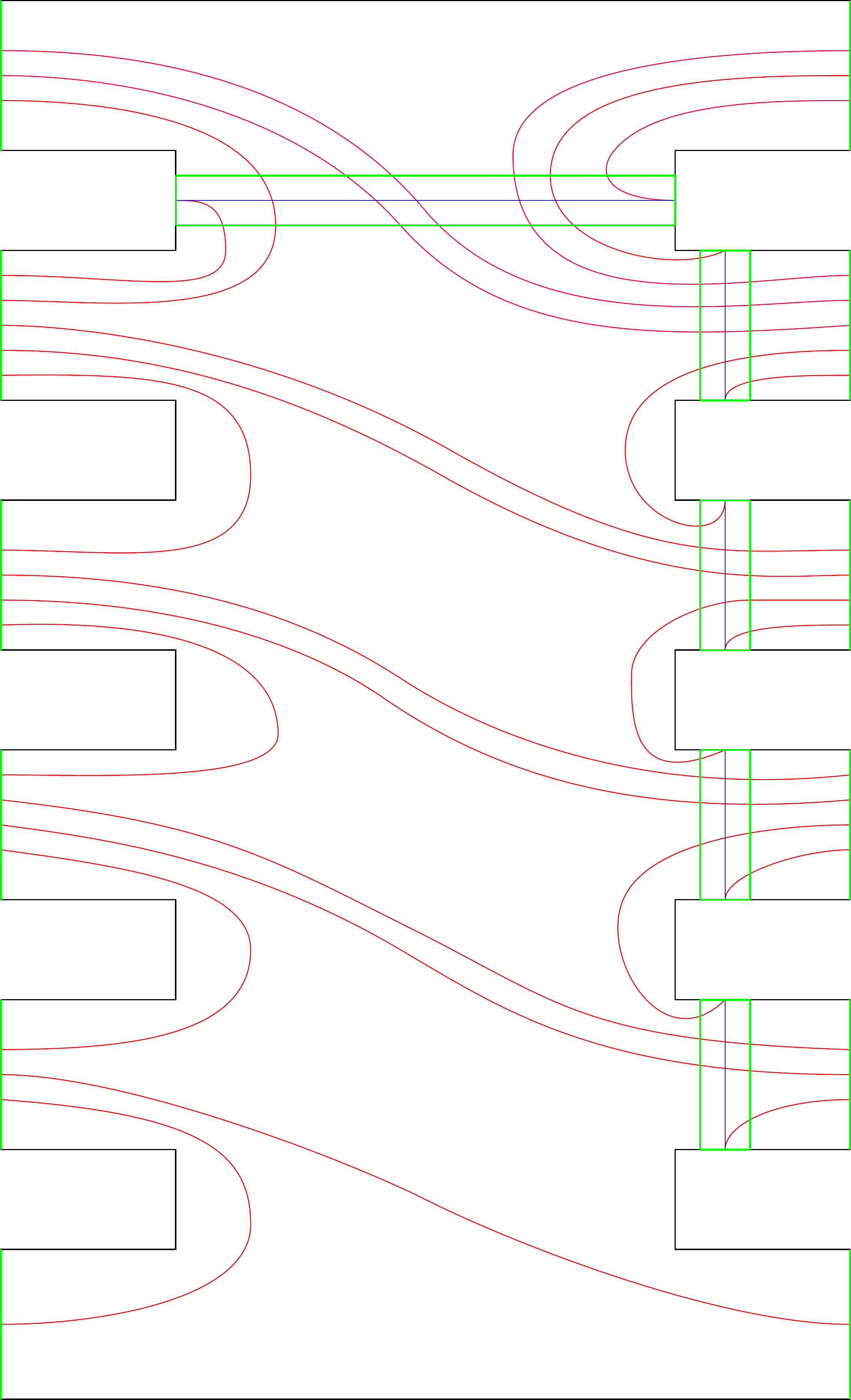}%
}
\caption{A sutured diagram arising from a partial open book decomposition of a neighborhood of a Giroux torsion domain.
The opposite green arcs in the boundary are identified.}
\label{fig:gt}
\end{figure}

\begin{figure}
\resizebox{.7\textwidth}{!}{%
\includegraphics{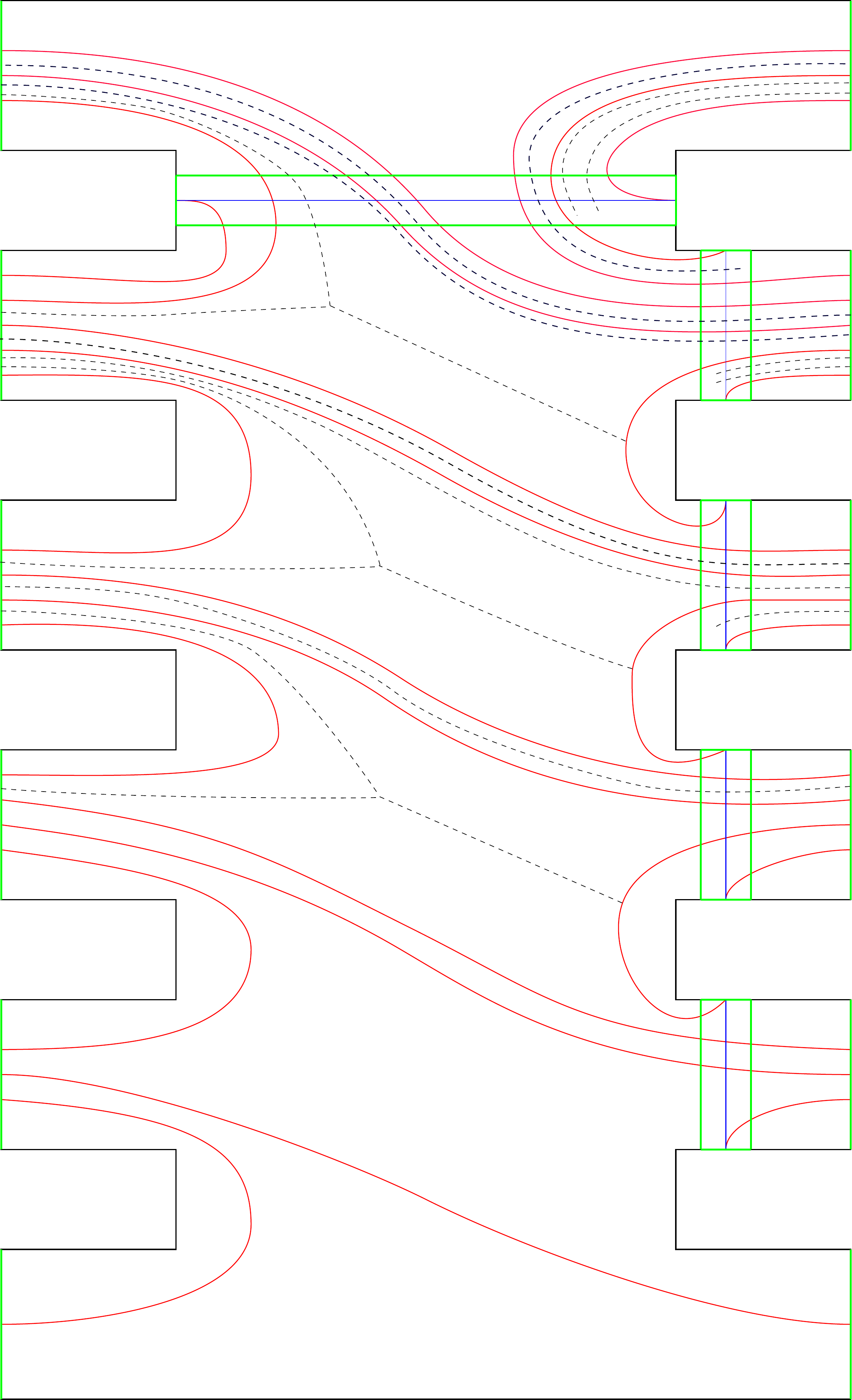}%
}
\caption{We apply the Sarkar-Wang algorithm by isotoping the red curves along the dashed arcs.}
\label{fig:dashed}
\end{figure}

Applying the Sarkar-Wang algorithm~\cite{key-12} to this diagram along the dotted arcs in Figure~\ref{fig:dashed},
we obtain the one in Figure~\ref{fig:SW}. It is easy to check that every
region that does not intersect the boundary is either a bigon or a quadrilateral.
In Figure~\ref{fig:SW}, the $\b$-curves are shown in red and the $\a$-curves in blue,
and the opposite green arcs in the boundary of the surface are identified.
The intersection points between $\bolda$ and $\boldb$ are labeled $x_1, \dots, x_{18}$
from right-to-left along the horizontal blue arc, and along the four vertical blue arcs
they are labeled from top-to-bottom $y_1, \dots, y_{15}$, $z_1, \dots, z_{10}$,
$w_1, \dots, w_6$, and $v_1, \dots, v_3$, respectively.
\begin{figure}
\resizebox{.7\textwidth}{!}{%
\includegraphics{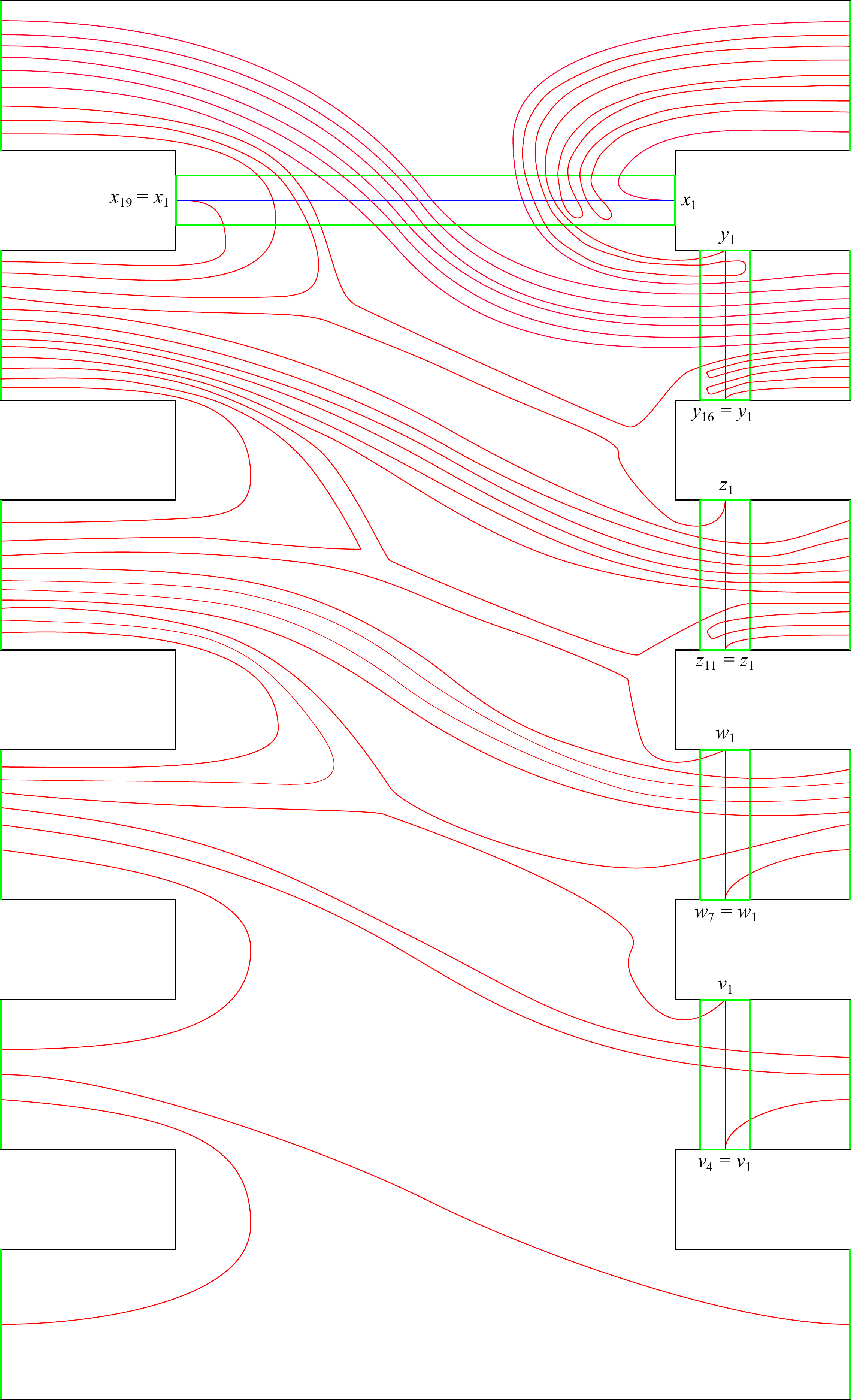}%
}
\caption{The diagram after applying the Sarkar-Wang algorithm.}
\label{fig:SW}
\end{figure}

\begin{figure}
\resizebox{.7\textwidth}{!}{%
\includegraphics{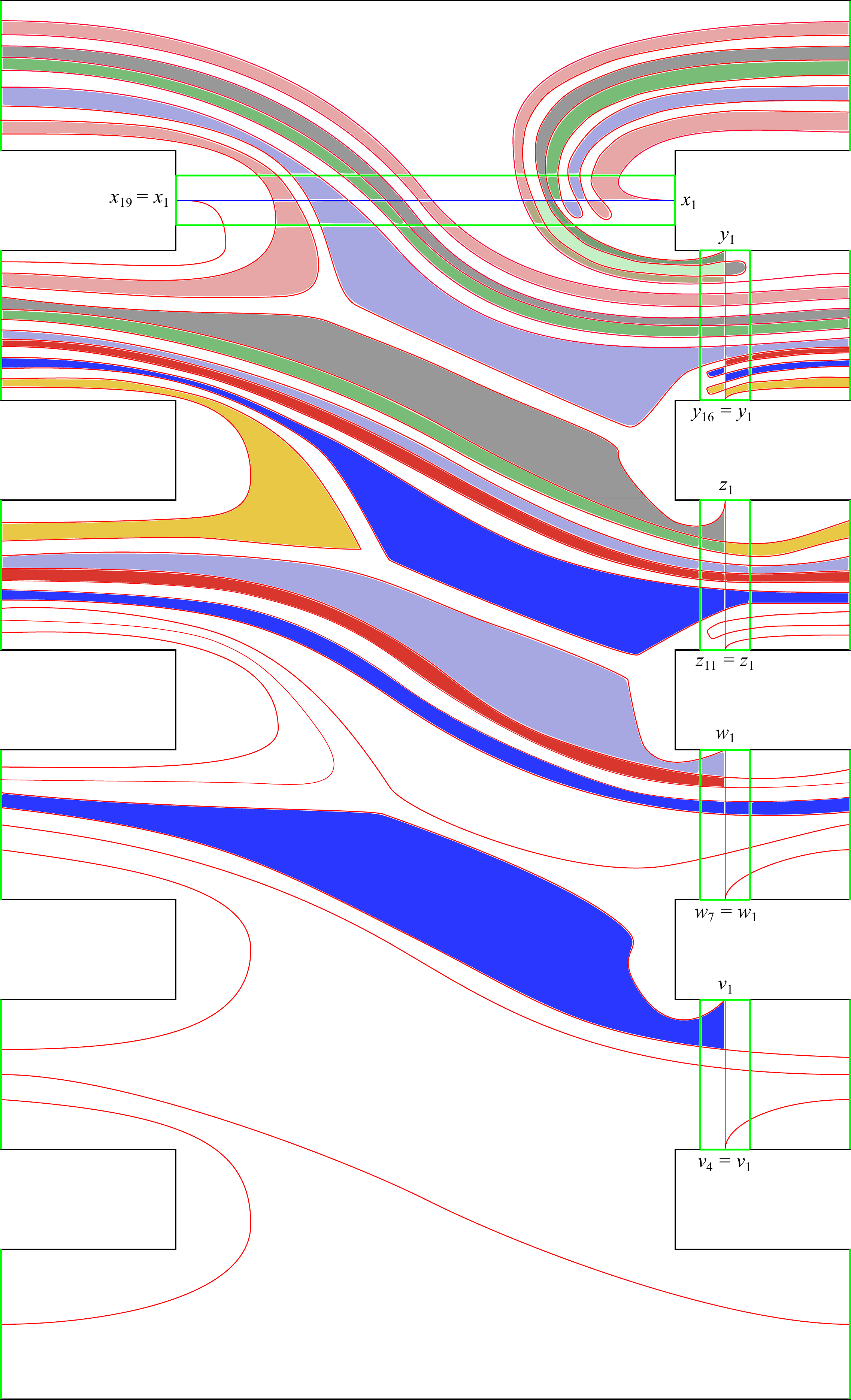}%
}
\caption{The quadrilaterals and bigons relevant to the computation are shaded.}
\label{fig:color}
\end{figure}

The contact element $\EH\left(\xi|_{\widetilde{M}}\right)$ is represented by the
unordered tuple $(x_{1},y_{1},z_{1},w_{1}, v_1)$. We now directly prove
that the contact invariant of $\widetilde{M}$ is zero and calculate
its spectral order with respect to the given diagram, thus giving
an upper bound on $\AT(\widetilde{M})$.

If $Q$ is a quadrilateral component of $\S \setminus (\bolda \cup \boldb)$ disjoint from $\partial \S$
with corners $c_1$, $c_2$, $c_3$, $c_4 \in \bolda \cap \boldb$,
then we say that $c_{1}$, $c_{3}$ are its from-corners and $c_{2}$, $c_{4}$
are its to-corners if
\[
\partial(\partial Q \cap \bolda) = c_1 + c_3 - c_2 - c_4.
\]
For any generator $(c_1,c_3,\dots) \in \T_\a \cap \T_\b$,
the coefficient of $(c_{2},c_{4},\dots)$ in the boundary $\partial(c_{1},c_{3},\dots)$
is the number of such quadrilaterals.

Since the only quadrilateral whose to-corners
are in $\{x_{1},y_{1},z_{1},w_{1},v_1\}$ is $y_{1}y_{2}z_{1}z_{2}$,
we get that
\[
\partial(x_{1},y_{2},z_{2},w_{1},v_1)=(x_{1},y_{1},z_{1},w_{1},v_1)+(x_{1},y_{3},z_{2},w_{1},v_1),
\]
where the last term comes from the bigon $y_{2}y_{3}$. This quadrilateral and bigon are  shaded
grey in Figure~\ref{fig:SW}.

The only quadrilateral whose to-corners are in $\{x_{1},y_{2},z_{2},w_{1},v_1\}$
is $x_{1}x_{2}y_{3}y_{4}$, and we have that
\[
\partial(x_{2},y_{4},z_{2},w_{1},v_1)=(x_{1},y_{3},z_{2},w_{1},v_1)+(x_{3},y_{4},z_{2},w_{1},v_1),
\]
where the last term comes from the bigon $x_{2}x_{3}$.
This quadrilateral and bigon are  shaded pink in Figure~\ref{fig:SW}.

The only quadrilateral whose to-corners are in $\{x_{3},y_{4},z_{2},w_{1},v_1\}$
is $x_{3}x_{4}w_{1}w_{2}$, and we have that
\[
\partial(x_{4},y_{4},z_{2},w_{2},v_1)=(x_{3},y_{4},z_{2},w_{1},v_1)+(x_{5},y_{4},z_{2},w_{2},v_1),
\]
where the last term comes from the bigon $x_{4}x_{5}$.
This quadrilateral and bigon are shaded light blue in Figure~\ref{fig:SW}.

The only quadrilateral whose to-corners are in $\{x_{5},y_{4},z_{2},w_{2},v_1\}$
is $x_{5}x_{6}z_{2}z_{3}$, shaded green in Figure~\ref{fig:SW}, and we have that
\[
\partial(x_{6},y_{4},z_{3},w_{2},v_1)=(x_{5},y_{4},z_{2},w_{2},v_1)+(x_9,y_1,z_3,w_2,v_1),
\]
where the last term comes from the quadrilateral $x_6x_9y_4y_1$.

The only quadrilateral whose to-corners are in $\{x_9,y_1,z_3,w_2,v_1\}$
is $y_1y_{15}z_{3}z_{2}$, and we have that
\[
\partial(x_{9},y_{15},z_{2},w_{2},v_1)=(x_9,y_1,z_3,w_2,v_1)+(x_9,y_{14},z_2,w_2,v_1),
\]
where the last term comes from the bigon $y_{15}y_{14}$.
This quadrilateral and bigon are shaded yellow in Figure~\ref{fig:SW}.

The only quadrilateral whose to-corners are in $\{x_9,y_{14},z_2,w_2,v_1\}$
is $y_{14}y_{13}v_1v_2$, and we have that
\[
\partial(x_{9},y_{13},z_{2},w_{2},v_2)=(x_9,y_{14},z_2,w_2,v_1)+(x_9,y_{12},z_2,w_2,v_2),
\]
where the last term comes from the bigon $y_{13}y_{12}$.
This quadrilateral and bigon are shaded blue in Figure~\ref{fig:SW}.

Finally, the only quadrilateral whose to-corners are in $\{x_9,y_{12},z_2,w_2,v_2\}$
is $y_{12}y_{11}w_2w_3$, shown in red, and we have that
\[
\partial(x_{9},y_{11},z_{2},w_{3},v_2)=(x_9,y_{12},z_2,w_2,v_2).
\]

Hence, over $\mathbb{F}_2$, 
\[
\begin{split}
\partial((x_{1},y_{2},z_{2},w_{1},v_1)+(x_{2},y_{4},z_{2},w_{1},v_1)+(x_{4},y_{4},z_{2},w_{2},v_1)+(x_{6},y_{4},z_{3},w_{2},v_1)\\
+ (x_{9},y_{15},z_{2},w_{2},v_1) + (x_{9},y_{13},z_{2},w_{2},v_2) + (x_{9},y_{11},z_{2},w_{3},v_2))=
(x_{1},y_{1},z_{1},w_{1}),
\end{split}
\]
which is exactly $\x_{\xi|_{\widetilde{M}}}$. Thus $\EH\left(\xi|_{\widetilde{M}}\right)=0$,
so the spectral order of $\widetilde{M}$ is finite.

\begin{rem*}
This result, together with the gluing map of \cite{key-2}, gives
an explicit computational proof of the fact that the contact invariant
of any contact $3$-manifold with Giroux $2\pi$-torsion vanishes,
which was proven in the closed case by Ghiggini, Honda, and Van Horn-Morris~\cite{key-5}.
\end{rem*}

\begin{rem*}
In \cite[Example 6-(c)]{key-1}, Honda, Kazez, and Mati\'c showed that if we only attach four bypasses to a basic slice;
i.e., if the contact structure is minimally twisting,
then the contact invariant is non-zero because it embeds in the unique Stein
fillable contact structure on~$T^{3}$, which already has non-zero
contact invariant. This can also be shown explicitly using a computation analogous to, but simpler
than the one above. Hence, it is necessary to enlarge the Giroux $2\pi$-torsion domain a bit
to obtain vanishing of the contact element.
\end{rem*}

\begin{prop} \label{prop:gt}
For the perturbed Giroux $2\pi$-torsion domain $\widetilde{M}$, we have
\[
\AT\left(\widetilde{M}, \xi|_{\widetilde{M}} \right)\le 2.
\]
\end{prop}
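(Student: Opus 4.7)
The plan is to leverage the explicit seven-term chain $C$ constructed in the computation above, which satisfies $\widehat{\partial}_{\SFH} C = \EH(\xi|_{\widetilde M})$. By the remark following the definition of $\AT$, it suffices to produce $c_0, c_1, c_2 \in \widehat{CF}(\Sigma, \boldb, \bolda, \mathbf{z})$ satisfying the three equations~\eqref{eqn:main} with $k = 2$; my goal is to show that $C$ and its summands furnish such a triple.

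First, I would compute $J_+$ for each of the thirteen disks appearing in the telescoping: the eight small quadrilaterals $y_1 y_2 z_1 z_2$, $x_1 x_2 y_3 y_4$, $x_3 x_4 w_1 w_2$, $x_5 x_6 z_2 z_3$, $x_6 x_9 y_4 y_1$, $y_1 y_{15} z_3 z_2$, $y_{14} y_{13} v_1 v_2$, $y_{12} y_{11} w_2 w_3$, together with the five small bigons $y_2 y_3$, $x_2 x_3$, $x_4 x_5$, $y_{15} y_{14}$, $y_{13} y_{12}$. Using formula~\eqref{eqn:J}, a small embedded bigon satisfies $n_{\mathbf{x}}(D) = n_{\mathbf{y}}(D) = 1/4$ and so $J_+(D) = |\mathbf{x}| - |\mathbf{y}|$, while a small embedded quadrilateral satisfies $n_{\mathbf{x}}(D) = n_{\mathbf{y}}(D) = 1/2$ and so $J_+(D) = 1 + |\mathbf{x}| - |\mathbf{y}|$. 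The cycle counts $|\mathbf{x}|$ are read off from the matching data in Figure~\ref{fig:SW}.

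Second, I would verify that every one of the thirteen $J_+$ values lies in $\{0, 2, 4\}$ and that in each cancelling pair of disks in the telescoping the two disks producing the common intermediate generator share the same $J_+$. Under these conditions, the cancellations in $\widehat{\partial}_{\SFH} C = \EH(\xi|_{\widetilde M})$ pass to a single $\partial_r$-level for each pair, so $\partial_r C = 0$ for all $r$ except for the one satisfying $2r = J_+(y_1 y_2 z_1 z_2)$, in which case $\partial_r C = \EH(\xi|_{\widetilde M})$. Setting $c_r = C$ and $c_i = 0$ for $i \neq r$ then satisfies~\eqref{eqn:main} trivially, establishing $\AT(\widetilde M, \xi|_{\widetilde M}) \leq r$; provided $r \leq 2$, the proof is complete.

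The main obstacle is executing the thirteen cycle-count tabulations from Figure~\ref{fig:SW} and checking the two conditions, namely the uniform bound $J_+ \leq 4$ and the matching of $J_+$ on each cancelling pair. Should the matching condition fail at some pair, or should the distinguished quadrilateral $y_1 y_2 z_1 z_2$ yield $J_+ > 4$, one would need to distribute the summands of $C$ non-trivially among $c_0, c_1, c_2$ and introduce auxiliary chain elements whose $\partial_0$- and $\partial_1$-boundaries absorb the mismatches while preserving~\eqref{eqn:main}. This is in principle feasible since $\EH(\xi|_{\widetilde M})$ is null-homologous in $\SFH$, but the combinatorial accounting would become considerably more delicate.
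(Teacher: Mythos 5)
Your plan splits into a ``primary'' step and a ``fallback,'' but in this instance the primary step fails and the fallback is where all the work actually lies. The primary step rests on the assumption that, at every intermediate generator where two disks cancel in the telescoping, both disks have the same value of $J_+$. This is false here. The generator $(x_5,y_4,z_2,w_2,v_1)$ is reached both by the bigon $x_4x_5$, which has $J_+=0$, and by the quadrilateral $x_5x_6z_2z_3$, which has $J_+=2$ (its cycle count increases across the disk: $|(x_6,y_4,z_3,w_2,v_1)|-|(x_5,y_4,z_2,w_2,v_1)|=3-2=1$, so $J_+=1+1=2$). Similarly, $(x_9,y_{12},z_2,w_2,v_2)$ is reached by the bigon $y_{13}y_{12}$ with $J_+=0$ and by the quadrilateral $y_{12}y_{11}w_2w_3$ with $J_+=2$. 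Consequently $\partial_0 C\neq 0$ and $\partial_1 C\neq 0$, and no single choice $c_r=C$ satisfies equation~\eqref{eqn:main}. (In fact, if the matching condition did hold, your $r$ would be $0$ since $J_+(y_1y_2z_1z_2)=0$, yielding $\AT\le 0$, which is a much stronger claim than the one being proved; that alone should flag the assumption as suspect.)

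The fallback --- distributing the summands of $C$ nontrivially among $c_0,c_1,c_2$ --- is therefore not a contingency but the proof itself, and you leave it unexecuted as ``considerably more delicate.'' The paper carries it out explicitly: with $\Z_2$ coefficients one takes $b_2=(x_9,y_{11},z_2,w_3,v_2)$, then $b_1=(x_6,y_4,z_3,w_2,v_1)+(x_9,y_{15},z_2,w_2,v_1)+(x_9,y_{13},z_2,w_2,v_2)+(x_9,y_{11},z_2,w_3,v_2)$, and $b_0=C+b_2$ (the first six summands of $C$). These are overlapping, not disjoint, sub-sums of $C$, and a direct check shows $\partial_0 b_0 + \partial_1 b_1 = \EH\bigl(\xi|_{\widetilde{M}}\bigr)$, $\partial_0 b_1 + \partial_1 b_2 = 0$, and $\partial_0 b_2 = 0$, which is exactly equation~\eqref{eqn:main} with $k=2$. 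Without exhibiting such a solution, the existence of a chain $C$ with $\widehat{\partial}_{\SFH}C=\EH$ only shows $\AT<\infty$; it does not give the bound $\AT\bigl(\widetilde{M},\xi|_{\widetilde{M}}\bigr)\le 2$.
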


\begin{proof}
The complete list of the $J$-holomorphic disks used in the calculations above and
the values used to compute their $J_{+}$ are given in the table below.
If we label the $\a$- and $\b$-curves such that $x_1 \in \a_1 \cap \b_1$,
$y_1 \in \a_2 \cap \b_2$, $z_1 \in \a_3 \cap \b_3$, $w_1 \in \a_4 \cap \b_4$,
and $v_1 \in \a_5 \cap \b_5$, then
\[
\begin{split}
x_1, x_9, y_4, y_{16} \in \b_1, \\
x_6, y_1, z_2 \in \b_2, \\
z_1, x_2, x_3, y_2, y_3, y_{11}, w_2 \in \b_3, \\
x_4, x_5, y_{14}, y_{15}, z_3, w_1, v_2 \in \b_4, \\
y_{12}, y_{13}, w_3, v_1 \in \b_5.
\end{split}
\]
Furthermore,
$x_i \in \a_1$, $y_i \in \a_2$, $z_i \in \a_3$, $w_i \in \a_4$, and $v_i \in \a_5$
for any~$i$. Note that if there is a bigon connecting $\x$, $\y \in \T_\a \cap \T_\b$,
then $|\x| = |\y|$.
Using this,
\[
\begin{split}
|(x_1,y_1,z_1,w_1,v_1)| = |(1)(2)(3)(4)(5)| = 5, \\
|(x_1,y_2,z_2,w_1,v_1)| = |(1)(23)(4)(5)| = 4 = |(x_1,y_3,z_2,w_1,v_1)|, \\
|(x_2,y_4,z_2,w_1,v_1)| = |(132)(4)(5)| = 3 = |(x_3,y_4,z_2,w_1,v_1)|, \\
|(x_4,y_4,z_2,w_2,v_1)| = |(1432)(5)| = 2 = |(x_5,y_4,z_2,w_2,v_1)|, \\
|(x_6,y_4,z_3,w_2,v_1)| = |(12)(34)(5)| =  3, \\
|(x_9,y_1,z_3,w_2,v_1)| = |(1)(2)(34)(5)| = 4, \\
|(x_9,y_{15},z_2,w_2,v_1)| = |(1)(243)(5)| = 3 = |(x_9,y_{14},z_2,w_2,v_1)| \\
|(x_9,y_{14},z_2,w_2,v_2)| = |(1)(2543)| = 2 = |(x_9,y_{12},z_2,w_2,v_2)|, \\
|(x_9,y_{11},z_2,w_3,v_2)| = |(1)(23)(45)| = 3.
\end{split}
\]

From the table below, we see that every $J$-holomorphic disk $u$ used
to compute the differential satisfies $J_{+}(u) \le 2$; cf.~Equation~\ref{eqn:J}.

\begin{center}
\begin{tabular}{|c|c|c|c|c|}
\hline
Type & Name & $2(n_{\mathbf{x}}+n_{\mathbf{y}})$ & $|\mathbf{x}|-|\mathbf{y}|$ & $J_{+}$\tabularnewline
\hline
\hline
quadrilateral & $y_{1}y_{2}z_{1}z_{2}$ & 2 & -1 & 0\tabularnewline
\hline
quadrilateral & $x_{1}x_{2}y_{3}y_{4}$ & 2 & -1 & 0\tabularnewline
\hline
quadrilateral & $x_{3}x_{4}w_{1}w_{2}$ & 2 & -1 & 0\tabularnewline
\hline
quadrilateral & $x_{5}x_{6}z_{2}z_{3}$ & 2 & 1 & 2\tabularnewline
\hline
quadrilateral & $x_6 x_9 y_4 y_1$ & 2 & -1 & 0 \tabularnewline
\hline
quadrilateral & $y_1 y_{15} z_3 z_2$ & 2 & -1 & 0 \tabularnewline
\hline
quadrilateral & $y_{14} y_{13} v_1 v_2$ & 2 & -1 & 0 \tabularnewline
\hline
quadrilateral & $y_{12} y_{11} w_2 w_3$ & 2 & 1 & 2 \tabularnewline
\hline
bigon & $y_{2}y_{3}$ & 1 & 0 & 0\tabularnewline
\hline
bigon & $x_{2}x_{3}$ & 1 & 0 & 0\tabularnewline
\hline
bigon & $x_{4}x_{5}$ & 1 & 0 & 0\tabularnewline
\hline
bigon & $y_{15}y_{14}$ & 1 & 0 & 0\tabularnewline
\hline
bigon & $y_{13}y_{12}$ & 1 & 0 & 0\tabularnewline
\hline
\end{tabular}
\par\end{center}

For simplicity, we will write $(i,j,k,l,m)$ for the generator $(x_i,y_j,z_k,w_l,v_m)$.
Then let
\[
\begin{split}
b_0 &= (1,2,2,1,1) + (2,4,2,1,1) + (4,4,2,2,1) + (6,4,3,2,1) + (9,15,2,2,1) + (9,13,2,2,2), \\
b_1 &= (6,4,3,2,1) + (9,15,2,2,1) + (9,13,2,2,2) + (9,11,2,3,2), \text{ and} \\
b_2 &= (9,11,2,3,2),
\end{split}
\]
considered as chains with $\Z_2$ coefficients. Using the table above,
\[
\begin{split}
\partial_0 b_0 &= (1,1,1,1,1) + (5,4,2,2,1) + (9,12,2,2,2), \\
\partial_0 b_1 &= (9,12,2,2,2) \text{ and } \partial_1 b_1 = (5,4,2,2,1) + (9,12,2,2,2), \\
\partial_0 b_2 &= 0 \text{, } \partial_1 b_2 = (9,12,2,2,2), \text { and } \partial_2 b_2 = 0.
\end{split}
\]
Hence $\partial_0 b_0 + \partial_1 b_1 = (1,1,1,1,1)$, $\partial_0 b_1 + \partial_1 b_2 = 0$,
and $\partial_0 b_2 = 0$. So, if we set $b_i = 0$ for every $i > 2$,
then $\widehat{\partial}(b_i)_{i \in \N} = (c_i)_{i \in \N}$, where $c_0 = (1,1,1,1,1)$
represents the $\EH$ class, and $c_i = 0$ for every $i > 0$. By equation~\eqref{eqn:main},
the element $\x_{\xi|_{\widetilde{M}}}$ lies in $B^2_0$,
and hence vanishes in $E^3$; i.e., $\AT\left(\widetilde{M}, \xi|_{\widetilde{M}}\right)\le 2$,
as claimed.
\end{proof}

As an immediate corollary, we obtain Theorem~\ref{thm:gt} from the introduction.

\begin{cor}
If a contact $3$-manifold $(M,\xi)$ with convex boundary has Giroux $2\pi$-torsion,
then
\[
\AT(M,\xi) \le 2.
\]
\end{cor}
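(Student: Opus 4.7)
The plan is to assemble the corollary directly from Proposition~\ref{prop:gt} and Theorem~\ref{thm:ineq}, which have already done the substantive work. Concretely, suppose $(M,\xi)$ has Giroux $2\pi$-torsion, meaning that there is a contact embedding of $(M_{2\pi},\eta_{2\pi}) = (T^2 \times [0,1], \ker(\cos(2\pi t)\,dx - \sin(2\pi t)\,dy))$ into $(M,\xi)$. First I would invoke the enlargement procedure described just before Proposition~\ref{prop:gt} (following Ghiggini~\cite{key-3} and \cite[Lemma~5]{key-5}) to produce a slightly extended embedded domain $(M', \eta')$ with pre-Lagrangian boundary tori, and then perturb the boundary to obtain a codimension zero contact submanifold $\widetilde{M} \subset \mathrm{Int}(M)$ with convex boundary whose dividing slopes can be arranged (after change of coordinates) to be $0$ and $\infty$.

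Once $\widetilde{M}$ is produced, the rest is immediate: Proposition~\ref{prop:gt} gives
\[
\AT\bigl(\widetilde{M}, \xi|_{\widetilde{M}}\bigr) \le 2,
\]
and Theorem~\ref{thm:ineq}, applied to the pair $\widetilde{M} \subset M$, gives
\[
\AT(M,\xi) \le \AT\bigl(\widetilde{M}, \xi|_{\widetilde{M}}\bigr).
\]
Chaining these inequalities yields $\AT(M,\xi) \le 2$, as required.

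There is essentially no obstacle beyond verifying that $\widetilde{M}$ sits in the interior of $M$ as a codimension zero submanifold with convex boundary, so that Theorem~\ref{thm:ineq} applies verbatim. This is already handled by the construction recalled in the paragraph preceding Proposition~\ref{prop:gt}: the extension parameters $\epsilon_0, \epsilon_1 > 0$ can be chosen small enough that $M'$ still embeds inside $\mathrm{Int}(M)$, and Ghiggini's perturbation replaces the non-convex boundary by a convex one without leaving $\mathrm{Int}(M)$. Hence the corollary follows with no additional computation.
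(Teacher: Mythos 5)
Your proof is correct and follows exactly the same route as the paper: embed the perturbed torsion domain $\widetilde{M}$ into $\mathrm{Int}(M)$ with convex boundary, then chain Proposition~\ref{prop:gt} with the monotonicity inequality of Theorem~\ref{thm:ineq}. Nothing to add.
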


\begin{proof}
If the Giroux $2\pi$-torsion domain $M_{2\pi}$ embeds in $M$, then
the perturbed domain $\widetilde{M}$ also embeds in $M$, by the
argument outlined at the beginning of this section. Then Theorem~\ref{thm:ineq}
and Proposition~\ref{prop:gt} imply that
\[
\AT(M,\xi)\le \AT\left(\widetilde{M}, \xi|_{\widetilde{M}}\right) \le 2.
\]
\end{proof}

\section{Open questions}

We raise some questions that naturally arise from the discussions above.
First, as in the case of closed contact $3$-manifolds, we would like to
know how the spectral order $\AT(\mathcal{P},\underline{\mathbf{a}})$
depends on the choice of partial open book decomposition~$\mathcal{P}$ and
arc system~$\underline{\mathbf{a}}$.

\begin{rem*}
Given two possible choices of partial open book decompositions $(\mathcal{P},\arca)$
and $(\mathcal{P}',\arca')$ for a given
contact $3$-manifold $(M,\xi)$ with convex boundary, it is natural to ask whether
$\AT(\mathcal{P},\arca)=\AT(\mathcal{P}',\arca')$.
In the closed case, according to Kutluhan et al.~\cite{key-14},
the number $\AT(S,\phi,\underline{\mathbf{a}})$
does not depend on the isotopy class of the arc basis $\underline{\mathbf{a}}$,
but if two arc bases differ by an arc-slide, the corresponding values
of $\AT$ might not be the same. Since our definition of $\AT$ is a direct
generalization of the original one, the same holds in our case.
\end{rem*}

Now, given the inequality $\AT(N,\xi|_{N}) \ge \AT(M,\xi)$,
whenever $(N,\xi|_{N})$ is a compact codimension zero submanifold
of $(M,\xi)$ with convex boundary, we are led to the following question.

\begin{qn} \label{qn:1}
If a contact $3$-manifold $(N,\xi)$ with convex boundary satisfies
$\AT(M,\xi_{M}) \le k$ for every closed contact $3$-manifold
$(M,\xi_{M})$ in which $(N,\xi)$ embeds, do we have $\AT(N,\xi)\le k$?
\end{qn}

An affirmative answer to Question~\ref{qn:1} would imply that the inequality
$\AT(N,\xi|_{N}) \le \AT(M,\xi)$ is sharp
and cannot be improved without giving extra conditions even when~$M$
is assumed to be closed. We can ask the following question
regarding the spectral order of planar torsion domains.

\begin{qn} \label{qn:2}
Is there a way to prove that the order of a Giroux torsion domain is at most~$1$, instead of~$2$?
\end{qn}

The upper bound to the spectral order of a Giroux torsion domain is predicted to be $1$ by \cite[Question~6.3]{key-14}, since a Giroux torsion domain is a planar torsion domain of order $1$. However, our computation only allows us to prove that it is at most $2$. If the above question has an affirmative answer, then we must be able to prove it via explicit computation by starting from a complete system of arcs, and then duplicating the arcs, one by one. The problem is that the resulting diagram is too large for practical computation by hand.

Finally, probably the most interesting question in this area is whether
the converse of Theorem~\ref{thm:OT} holds, analogously to \cite[Question~6.1]{key-14}.

\begin{qn}
If $\AT(M,\xi) = 0$, then does this imply that $\xi$ is overtwisted?
\end{qn}

\end{document}